\newtheorem{thm}{Theorem}[section]
\newtheorem{mainthm}{Main Theorem}
\newtheorem{prop}[thm]{Proposition}
\newtheorem{cor}[thm]{Corollary}
\newtheorem{lem}[thm]{Lemma}
\theoremstyle{definition}
\newtheorem{defn}[thm]{Definition}
\newtheorem{qu}{Question}
\newtheorem{notn}[thm]{Notation}
\newtheorem{rem}[thm]{Remark}
\newtheorem{obs}[thm]{Observation}
\numberwithin{equation}{section}
\begin{document}
\title{The Topological Complexity of a Surface}
\author{Aldo-Hilario Cruz-Cota}
\address{Department of Mathematics and Statistics, University of Nebraska at Kearney, Kearney, NE 68845, USA}
\email{cruzcotaah@unk.edu}
\subjclass[2010]{Primary 57M12; Secondary 30F99}
\keywords{Branched coverings; Complexity of surfaces}

\begin{abstract} 

\noindent Let $p$ be a branched covering of a Riemann surface to the Riemann sphere $\mathbb{P}^1$, with branching set $B \subset \mathbb{P}^1$. We define the \emph{complexity} of $p$ as infinity, if $\mathbb{P}^1 \setminus B$ does not admit a hyperbolic structure, or the product of its degree and the hyperbolic area of $\mathbb{P}^1 \setminus B$, otherwise. The  \emph{topological complexity} of a surface $S$ is defined as the infimum of the set of all complexities of branched coverings $M \to \mathbb{P}^1$, where $M$ is a Riemann surface homeomorphic to $S$. We prove that if $S$ is a connected, closed, orientable surface of genus $g$, then its topological complexity, $C_{\text{top}}(S)$, is given by:

\[C_{\text{top}}(S)=
\left\{
	\begin{array}{cl}
		2\pi(2g+1)   & \mbox{if } g \geq 1,\\
		 6 \pi  & \mbox{if } g=0.	
	\end{array}
\right.\]
\end{abstract}
\maketitle

\section{Introduction}

Given a surface $S$, in this article we study all the possible ways in which a Riemann surface homeomorphic to $S$  can be realized as the branched covering of the Riemann sphere $\mathbb{P}^1$. To that end, we use a generalization of the complexity function for branched coverings introduced in \cite{CC-RR-BAMS}, which is defined as follows. Given a branched covering of the Riemann sphere $\mathbb{P}^1$, let $X$ denote the complement of the branching set in $\mathbb{P}^1$. Since $X$ is a domain of the Riemann sphere, it naturally has the structure of a Riemann surface. If $X$ admits a hyperbolic structure, we define the \emph{complexity} of the branched covering as the product of its degree and the hyperbolic area of $X$. If $X$ does not admit a hyperbolic structure, we define the \emph{complexity} of the branched covering to be infinity.

For a (connected, closed, orientable) Riemann surface $M$, we define its \emph{complexity} as the infimum of the set of all the complexities of  branched coverings $M \to \mathbb{P}^1$. We extend the definition of complexity to topological surfaces as follows. If $S$ is a (connected, closed, orientable) surface, we define its \emph{topological complexity} as the infimum of the set of all the complexities of  the Riemann surfaces that are homeomorphic to $S$. 

In \cite{CC-RR-BAMS}, the authors found a formula for the complexity of a Riemann surface, but that formula is in terms of an integer that is hard to find in practice, mainly because of its close connection to the \emph{Hurwitz problem} for the sphere. This problem is very difficult: it has been open for more than a century and is still open today in its full generality (see the next paragraph). The raison d'\^{e}tre for this article was to find  explicit formulas for the topological complexity of a surface that did not involve using the general Hurwitz problem. One of the main ingredients in the proofs of these formulas was adapting a variety of partial solutions to the Hurwitz problem to our context.

The Hurwitz problem for the sphere essentially asks when a collection of partitions of a positive integer can be realized as the combinatorial data of a branched covering of the sphere (see Section \ref{sect-Prelim} for precise statements). This problem was first studied by Hurwitz in \cite{Hurwitz}, although in the more general case in which the sphere is replaced by an arbitrary surface. Since then, many people have studied the Hurwitz problem, such as the authors of \cite{Edmonds}, \cite{Gersten},  \cite{Husemoller}, \cite{Ezell}, \cite{Baranski}, \cite{Pervova-Petronio-I}, \cite{Pervova-Petronio-II} and \cite{Pakovich}. In particular, we now know that the problem has been solved for almost all surfaces. The only instances of the problem that remain open reduce to the Hurwitz problem for the sphere (see Section 2 of \cite{Pervova-Petronio-I}). 

We now state the main results of this article.

\begin{mainthm} \label{main-thm-intro-top-comp}
Let $S$ be a connected, closed, orientable surface of genus $g$. Let $C_{\text{top}}(S)$ denote the topological complexity of $S$. Then 

\[C_{\text{top}}(S)=
\left\{
	\begin{array}{cl}
		2\pi(2g+1)   & \mbox{if } g \geq 1,\\
		 6 \pi  & \mbox{if } g=0.	
	\end{array}
\right.\]
\end{mainthm}

Following \cite{CC-RR-BAMS}, we will say that a branched covering $p \colon M \to \mathbb{P}^1$ of degree $d$ is \emph{simple} if the cardinality of $p^{-1}(y)$ is at least $d-1$ for all $y \in \mathbb{P}^1$. We define the \emph{simple topological complexity} of a (connected, closed, orientable) surface $S$ as the infimum of the complexities of all simple branched coverings $M \to \mathbb{P}^1$ with $M$ homeomorphic to $S$. Our second main result gives us explicit formulas for the simple topological complexity of a surface. 

\begin{mainthm} \label{main-thm-intro-simp-comp}
Let $S$ be a connected, closed, orientable surface of genus $g$. Let $C_{\text{simp}}(S)$ denote the simple topological complexity of $S$. Then 

\[C_{\text{simp}}(S)=
\left\{
	\begin{array}{cl}
		8\pi g   & \mbox{if } g \geq 1,\\
		 12\pi  & \mbox{if } g=0.	
	\end{array}
\right.\]
\end{mainthm}

Our proofs of the two main theorems are constructive; they exhibit the combinatorial data of branched coverings of minimal complexity (among those coverings whose domains have a fixed topological type). 

\begin{notn} In this paper we will only consider Riemann surfaces that are connected, closed and orientable.  Such surface  will be denoted by $M$, except in the case of the Riemann sphere, which we denote by $\mathbb{P}^1$. A topological surface will be denoted by $S$; the topological $2$-sphere will be denoted by $\mathbb{S}^2$. We use $\chi(M)$ to denote the Euler characteristic of the surface $M$. The infimum of a subset $A$ of the extended real line $[-\infty,\infty]$ will be denoted by $\inf(A)$.  The complex plane is denoted by $\mathbb{C}$. A sequence of positive integers in square brackets represents the elements of a partition. For example, $[1,1,1]$ denotes the trivial partition $3=1+1+1$. The cardinality of a set $B$ will be denoted by $|B|$.
\end{notn}

\section{Preliminaries} \label{sect-Prelim}

Most of the material in this section is contained in \cite{CC-RR-BAMS}. We include it here for the reader's  convenience.

\begin{defn} \label{Riemann-surf} 
A \emph{Riemann surface} is a complex manifold of complex dimension one.
\end{defn}

We now define the maps between Riemann surfaces that we will study in this article.

\begin{defn} \label{defn-branch-covering}
A \emph{branched covering} is a non-constant holomorphic map between connected Riemann surfaces.
\end{defn}

A branched covering is locally modeled by power functions.

\begin{prop} (\cite{Donaldson}) \label{branch-covering-as-power-fns}
 Let $p \colon M \to N$ be a branched covering between connected Riemann surfaces. Then for each $x \in M$ there is a unique integer $k=k_x \geq 1$ such that we can find charts around $x$ in $M$ (with $x$ corresponding to $0 \in \mathbb{C}$) and $p(x)$ in $N$ (with $p(x)$ corresponding to $0 \in \mathbb{C}$) in which the map $p$ is represented by the map $z \mapsto z^{k}$.
\end{prop}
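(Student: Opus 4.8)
The plan is to reduce the statement to a purely local fact about holomorphic functions of one complex variable, and then to normalize the resulting germ to the monomial $z \mapsto z^k$ by an analytic change of coordinates. First I would fix any holomorphic chart $\phi$ around $x$ in $M$ with $\phi(x) = 0$ and any holomorphic chart $\psi$ around $p(x)$ in $N$ with $\psi(p(x)) = 0$, and consider the local representative $f = \psi \circ p \circ \phi^{-1}$, a holomorphic function defined on a neighborhood of $0$ in $\mathbb{C}$ with $f(0) = 0$. The whole problem is now to show that, after an additional holomorphic change of the source coordinate, $f$ becomes $z \mapsto z^k$ for a well-defined $k \geq 1$.

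Next I would argue that $f$ is not identically zero near $0$. Indeed, a branched covering is by Definition \ref{defn-branch-covering} a non-constant holomorphic map, and $M$ is connected; if $f$ vanished on an open set, the identity theorem would force $p$ to be constant on the connected surface $M$, a contradiction. Hence $f$ has a zero of some finite order $k \geq 1$ at the origin, so that $f(z) = z^k\, g(z)$ with $g$ holomorphic near $0$ and $g(0) \neq 0$. This $k$ is exactly the candidate integer in the statement.

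To produce the normalizing coordinate I would use the non-vanishing of $g$: shrinking to a simply connected neighborhood on which $g$ omits $0$, a holomorphic logarithm of $g$ exists, and therefore so does a holomorphic $k$-th root $g^{1/k} = \exp\!\bigl(\tfrac{1}{k}\log g\bigr)$. Setting $h(z) = z\, g(z)^{1/k}$ gives $f = h^k$, and since $h'(0) = g(0)^{1/k} \neq 0$, the holomorphic inverse function theorem shows that $h$ is a biholomorphism of a neighborhood of $0$ onto a neighborhood of $0$. Thus $w = h(z)$ defines a new holomorphic chart around $x$ (still sending $x$ to $0$), in which $p$ is represented by $w \mapsto w^k$, as required. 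The one step that needs care here is this construction of the $k$-th root and the verification that $h$ is genuinely a coordinate change rather than merely a holomorphic map; everything else is routine.

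Finally I would establish uniqueness of $k$. The key is that $k$ admits a description independent of the chosen charts: it is the order of vanishing of the local representative at $0$, and any two representatives differ by pre- and post-composition with biholomorphisms fixing $0$, which preserve this order. Concretely, if $z \mapsto z^k$ and $\zeta \mapsto \zeta^m$ both represent $p$ in two pairs of charts, comparing them through the (invertible, order-one) transition maps forces the two monomials to have the same order of vanishing, whence $k = m$. I expect this uniqueness bookkeeping, together with the $k$-th root construction, to be the only points requiring attention; the rest is a direct application of the identity theorem and the inverse function theorem.
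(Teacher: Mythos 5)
The paper offers no proof of this proposition; it is quoted directly from the cited reference (Donaldson), so there is nothing to compare against line by line. Your argument is the standard and correct proof of the local normal form: the identity theorem rules out a locally constant $p$, the order of vanishing $k$ of the local representative gives the factorization $f(z)=z^k g(z)$ with $g(0)\neq 0$, the holomorphic $k$-th root of $g$ and the inverse function theorem produce the normalizing chart $h(z)=z\,g(z)^{1/k}$, and uniqueness of $k$ follows because pre- and post-composition with biholomorphisms fixing $0$ preserve the order of vanishing. I see no gaps.
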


The integer $k_x \geq 1$ is called the \emph{ramification index} of $x \in M$. A point in $M$ with ramification index greater than $1$ is called a \emph{ramification point}. The \emph{ramification set} is the set of all ramification points. This set is a discrete subset of $M$; it is a finite set when $M$ is compact. The \emph{branching set} is the image of the ramification set under the branched covering. A point in the branching set  is called a \emph{branch point}.

Given a branched covering $p \colon M \to N$, we can find an integer $d \geq 1$ such that every point in $N$ has exactly $d$ pre-images in $M$ (if we count them with the multiplicities given by their ramification indices).

\begin{lem} (\cite{Donaldson}) \label{lem-degree}
\label{lem-counting mult} Let $p \colon M \to N$ be a branched covering between connected, compact Riemann surfaces. Then there exists a positive integer $d$ such that \[\displaystyle \sum_{x\in p^{-1}(y)} k_x=d\] for every $y \in N$. The integer $d$ is called the \emph{degree} of the branched covering.
\end{lem}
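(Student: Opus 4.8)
The plan is to show that the function $d \colon N \to \mathbb{Z}_{\geq 0}$ defined by $d(y) = \sum_{x \in p^{-1}(y)} k_x$ is locally constant; since $N$ is connected, an integer-valued locally constant function is constant, which yields the desired $d$. I would also record at the outset that $p$ is surjective: being a non-constant holomorphic map it is open (open mapping theorem), and since $M$ is compact its image is compact, hence closed; an open, closed, nonempty subset of the connected surface $N$ must be all of $N$. This guarantees every fiber is nonempty, so the common value $d$ is at least $1$.

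Fix $y_0 \in N$. First I would check that the fiber $p^{-1}(y_0)$ is finite. It is closed in the compact surface $M$, hence compact; and by the local model of Proposition \ref{branch-covering-as-power-fns}, near each of its points $p$ is modeled by $z \mapsto z^{k}$, whose only zero is $z = 0$, so each preimage is isolated in the fiber. A compact discrete set is finite, say $p^{-1}(y_0) = \{x_1, \dots, x_m\}$ with ramification indices $k_1, \dots, k_m$. For each $x_i$ I would choose, using Proposition \ref{branch-covering-as-power-fns}, a chart neighborhood $U_i$ of $x_i$ on which $p$ is represented by $z \mapsto z^{k_i}$, shrinking so that the $U_i$ are pairwise disjoint and each maps onto a common coordinate disk $V$ around $y_0$.

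The crucial step is to arrange, after possibly shrinking $V$, that $p^{-1}(V) \subseteq U_1 \cup \cdots \cup U_m$; this is the point where compactness of $M$ is indispensable. If this failed, there would be a sequence $y_n \to y_0$ with preimages $x_n \in p^{-1}(y_n) \setminus \bigcup_i U_i$; by compactness of the closed set $M \setminus \bigcup_i U_i$, a subsequence of $(x_n)$ converges to some $x_\ast$ in that set, and continuity forces $p(x_\ast) = y_0$, so $x_\ast \in p^{-1}(y_0) \setminus \bigcup_i U_i$, contradicting $p^{-1}(y_0) \subseteq \bigcup_i U_i$. I expect this ``no escape of preimages'' argument to be the main obstacle, since it is the only place the global hypothesis of compactness enters.

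Once this is in place, the count is a local computation in each chart. For $y \in V$ corresponding to $w$, the preimages of $y$ inside $U_i$ are the solutions of $z^{k_i} = w$: if $w = 0$ this is the single point $z = 0$ with index $k_i$, and if $w \neq 0$ it is $k_i$ distinct points, each of index $1$ since the derivative $k_i z^{k_i - 1}$ is nonzero there. In either case the contribution of $U_i$ to the fiber sum is exactly $k_i$. Summing over $i$ gives $d(y) = \sum_{i=1}^m k_i = d(y_0)$ for every $y \in V$, so $d$ is locally constant, completing the proof.
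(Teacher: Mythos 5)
Your proof is correct, and the paper offers no argument of its own here---it simply cites this as a known result from Donaldson's book. Your argument (local normal form $z \mapsto z^{k}$ at each point of a finite fiber, compactness of $M$ to trap all nearby preimages in the chosen charts, and a local count showing $y \mapsto \sum_{x \in p^{-1}(y)} k_x$ is locally constant on the connected base) is precisely the standard proof given in such references, with the key compactness step correctly identified and handled.
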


For the rest of this article, we will only study branched coverings of the Riemann sphere $\mathbb{P}^1$. The following notation will be used frequently.

\begin{defn}
A \emph{$(d,n)$-branched covering}  is a  branched covering $p \colon M \to \mathbb{P}^1$ that satisfies the following conditions:
\begin{enumerate}
 \item $M$ is a connected, closed, orientable Riemann surface.
 \item The degree of $p$ is $d$.
 \item The cardinality of the branching set of $p$ is $n$.
\end{enumerate}
\end{defn}

We will use the following version of the classical Riemann-Hurwitz formula for branched coverings of the Riemann sphere.

\begin{thm} \label{RH-form-alt}  Let $p \colon M \to \mathbb{P}^1$ be a $(d,n)$-branched covering with branching set $B$. If the pre-image $p^{-1}(B)$ has cardinality $m$, then \[\chi(M)-m=d(\chi(\mathbb{P}^1)-n).\]
\end{thm}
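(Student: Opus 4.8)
The plan is to derive this statement from the classical Riemann--Hurwitz formula together with the fiber-counting identity of Lemma \ref{lem-degree}. Recall that the classical formula asserts that for a branched covering $p \colon M \to \mathbb{P}^1$ of degree $d$ one has $\chi(M) = d\,\chi(\mathbb{P}^1) - \sum_{x \in M}(k_x - 1)$, where $k_x$ denotes the ramification index of $x$ and the sum --- which is finite because $M$ is compact --- runs over all ramification points. Thus the whole task reduces to showing that the ramification term $\sum_{x}(k_x - 1)$ equals $dn - m$; substituting this and using $\chi(\mathbb{P}^1) = 2$ gives $\chi(M) = 2d - (dn - m)$, which rearranges at once to the claimed identity $\chi(M) - m = d(\chi(\mathbb{P}^1) - n)$.

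First I would observe that the ramification term is concentrated over the branching set. By definition every ramification point maps into $B$, so any $x$ with $k_x > 1$ lies in $p^{-1}(B)$; conversely a point of $p^{-1}(B)$ that is not ramified has $k_x - 1 = 0$ and contributes nothing. Hence $\sum_{x \in M}(k_x - 1) = \sum_{x \in p^{-1}(B)}(k_x - 1)$, and I may reorganize this as a sum over the $n$ fibers lying above the points of $B$. For a fixed branch point $y \in B$, Lemma \ref{lem-degree} gives $\sum_{x \in p^{-1}(y)} k_x = d$, so $\sum_{x \in p^{-1}(y)}(k_x - 1) = d - |p^{-1}(y)|$. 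Summing over the $n$ branch points, and using that the fibers over distinct points of $B$ are disjoint so that their cardinalities add up to $m = |p^{-1}(B)|$, I would obtain
\[
\sum_{x \in p^{-1}(B)}(k_x - 1) = \sum_{y \in B}\bigl(d - |p^{-1}(y)|\bigr) = dn - m.
\]
Feeding this back into the classical formula completes the argument.

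I do not expect a serious obstacle, since the result is essentially a bookkeeping reformulation of a standard theorem; the only point requiring care is the claim that $\sum_{x}(k_x - 1)$ receives contributions only from $p^{-1}(B)$, which rests on the definitional fact that ramification points map precisely to branch points. If one wished to avoid quoting the classical formula, an alternative would be to triangulate $\mathbb{P}^1$ with the $n$ branch points among its vertices, pull the triangulation back along $p$, and compute $\chi(M)$ directly by counting pulled-back simplices: the pulled-back edges and faces are multiplied by $d$, while the vertices lying over $B$ number $m$ rather than $dn$, and this deficit $dn - m$ is exactly the source of the correction term. The Lemma \ref{lem-degree} route is shorter, so I would favor it.
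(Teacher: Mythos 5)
Your proof is correct. The paper itself states this theorem without proof, presenting it as a known reformulation of the classical Riemann--Hurwitz formula, so there is no argument in the paper to compare against; your derivation --- localizing the ramification term $\sum_x (k_x-1)$ to $p^{-1}(B)$ and then using Lemma \ref{lem-degree} fiberwise to convert it into $dn-m$ --- is exactly the standard bookkeeping that justifies the stated form, and every step checks out (in particular, the key observation that only points of $p^{-1}(B)$ can contribute to the ramification sum is correct, since by definition the branching set is the image of the ramification set). The alternative triangulation argument you sketch is also sound and would make the statement self-contained without invoking the classical formula as a black box.
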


Let $p \colon M \to \mathbb{P}^1$ be a $(d,n)$-branched covering. Suppose that $y_1,y_2,\cdots y_n$ are all the distinct branch points of the branched covering. For each $i=1,2,\cdots, n$, Lemma \ref{lem-degree} implies that the ramification indices of the points in $p^{-1}(y_i)$ form a partition $\Pi_i$ of the integer $d$. We gather all these partitions in a single collection $\Pi=\{\Pi_1,\Pi_2,\cdots,\Pi_n\}$ (with repetitions allowed). The triplet $(d,n,\Pi)$ contains all the combinatorial information about the branch points of the covering.

\begin{defn} Given a $(d,n)$-branched covering $p \colon M \to \mathbb{P}^1$, the triplet  $(d,n,\Pi)$ obtained as explained above is called the \emph{branch triplet} of the branched covering.
\end{defn}

We also consider collections of partitions that are not necessarily associated to branched coverings.

\begin{defn}
The triplet $(d,n,\Pi)$ is called an \emph{abstract branch triplet} if the following conditions are satisfied:
\begin{itemize}
 \item $d$ and $n$ are positive integers;
 \item $\Pi$ is a collection (with repetitions allowed) of $n$ partitions of $d$, each of which is not the trivial partition $[1,1,\cdots,1]$ of $d$.
\end{itemize}
The sum of the lengths of the partitions in $\Pi$ is called the \emph{total length} of the abstract branch triplet.
\end{defn}

We are primarily interested in abstract branch triplets that arise as branch triplets of branched coverings of the Riemann sphere $\mathbb{P}^1$.

\begin{defn} \label{defn-realizable} Let $\mathcal{T}$ be an abstract branch triplet and let $M$ be a connected, closed, orientable Riemann surface. We say that $\mathcal{T}$ is \emph{realizable} on $M$ if there exists a branched covering $p \colon M \to \mathbb{P}^1$  whose branch triplet is equal to $\mathcal{T}$. An abstract branch triplet is called  \emph{realizable} if it is realizable on a connected, closed, orientable Riemann surface.
\end{defn}

A natural question to ask is the following:

\begin{qu}
 What abstract branch triplets are realizable?
\end{qu}

The Riemann-Hurwitz formula gives a necessary condition for an abstract branch triplet to be realizable. For if  $(d,n,\Pi)$ is an abstract branch triplet of total length $m$ that is realizable on a (connected, closed, orientable) Riemann surface $M$, then the Riemann-Hurwitz formula (Theorem \ref{RH-form-alt}) implies that:
\begin{equation} \label{defn-comp-BT}
 \chi(M)-m=d(2-n).
\end{equation}

\begin{defn} \label{defn-comp}
An abstract branch triplet $(d,n,\Pi)$ of total length $m$ that satisfies equation \eqref{defn-comp-BT} is called \emph{compatible} with $M$. 
\end{defn}

Every abstract branch triplet that is realizable on a Riemann surface $M$ is  compatible with $M$. However, the converse of the last statement is not true (see \cite[Corollary 6.4]{Edmonds}). The problem of determining what compatible branch triplets are realizable as branched coverings of the sphere is known as the \emph{Hurwitz problem} for the sphere. This problem was first studied by Hurwitz in \cite{Hurwitz} and, more recently, many other mathematicians, such as the authors of \cite{Edmonds}, \cite{Gersten},  \cite{Husemoller}, \cite{Ezell}, \cite{Baranski}, \cite{Pervova-Petronio-I}, \cite{Pervova-Petronio-II} and \cite{Pakovich}. The problem is still open in its full generality (see Section 2 of \cite{Pervova-Petronio-I}). 

In the rest of the section we collect the partial solutions to the Hurwitz problem that we need to prove our main theorems. We start with a classical result of Edmonds, Kulkarni and Stong.

\begin{thm} \label{thm-EKS} (\cite[Proposition $5.2$]{Edmonds}) Let $\mathcal{T}=(d,n,\Pi)$ be an abstract branch triplet of total length $m$. Suppose that the length of one of the partitions in $\Pi$ is one, and that  $nd-m$ is an even number such that $nd-m \geq 2d-2$. Then $\mathcal{T}$ is realizable.
\end{thm}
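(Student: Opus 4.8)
The plan is to pass from branched coverings to their monodromy and reduce the statement to a purely group-theoretic factorization problem in the symmetric group $S_d$. Recall that a $(d,n)$-branched covering of $\mathbb{P}^1$ with branch points $y_1,\dots,y_n$ is determined, up to equivalence, by its monodromy: a tuple $(\sigma_1,\dots,\sigma_n)$ of permutations in $S_d$ with $\sigma_1\sigma_2\cdots\sigma_n=\mathrm{id}$ generating a transitive subgroup, where $\sigma_i$ records the local monodromy around $y_i$ and its cycle type is exactly the partition $\Pi_i$. Thus realizing $\mathcal{T}=(d,n,\Pi)$ is equivalent to producing permutations $\sigma_i$ of cycle type $\Pi_i$ whose product is the identity and whose supports together act transitively on $\{1,\dots,d\}$. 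I would set up this dictionary first.

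Next I would exploit the hypothesis that some partition, say $\Pi_n$, has length one, i.e. $\Pi_n=[d]$. Then $\sigma_n$ may be taken to be a single $d$-cycle, so $\langle\sigma_n\rangle$ already acts transitively and the connectivity requirement on $M$ becomes automatic. Writing $\sigma_1\cdots\sigma_{n-1}=\sigma_n^{-1}$, the task becomes: \emph{factor a fixed $d$-cycle as a product of permutations of prescribed cycle types $\Pi_1,\dots,\Pi_{n-1}$}. At this point I would reinterpret the numerical hypotheses through the \emph{defect} $e_i:=d-\ell(\Pi_i)$, the minimal number of transpositions needed to write a permutation of cycle type $\Pi_i$. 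Since $\ell(\Pi_n)=1$, a short computation turns ``$nd-m$ even'' and ``$nd-m\ge 2d-2$'' into the two conditions $\sum_{i<n}e_i\equiv d-1\pmod 2$ and $\sum_{i<n}e_i\ge d-1$. These are exactly the classical \emph{necessary} conditions for such a factorization: subadditivity of the defect under multiplication forces $\sum_{i<n}e_i\ge d-1$, and the identity $\operatorname{sign}(\sigma)=(-1)^{d-\ell}$ applied to the $d$-cycle forces the parity. Hence only sufficiency needs proof.

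For sufficiency I would argue by an explicit construction, reducing to transpositions. Decompose each nontrivial $\Pi_i$ (for $i<n$) into a minimal product of $e_i$ transpositions, one path of transpositions along the support of each cycle, so that the whole factorization uses $E:=\sum_{i<n}e_i$ transpositions grouped into $n-1$ consecutive blocks, block $i$ realizing $\Pi_i$. The classical fact that a $d$-cycle is a product of $t$ transpositions precisely when $t\ge d-1$ and $t\equiv d-1\pmod 2$ shows the arithmetic matches; the remaining work is to choose the supports and the ordering of the blocks so that, simultaneously, (i) each block multiplies to the \emph{exact} prescribed cycle type $\Pi_i$ and (ii) the global product is a single $d$-cycle, i.e. the associated transposition graph is connected and spanning. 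The excess $E-(d-1)=2g$ is absorbed precisely as the genus of $M$ dictated by \eqref{defn-comp-BT}. Equivalently, and perhaps more transparently, one builds a simple branched covering (all monodromy transpositions) of the surface of the correct genus, which is realizable by the standard theory of simple branched coverings, and then \emph{merges} the simple branch points in groups of sizes $e_i$ so that the colliding monodromies multiply to the desired $\Pi_i$.

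The main obstacle is exactly this last coordination step: a naive grouping of transpositions tends either to produce the wrong cycle type within a block (for instance, consecutive adjacent transpositions $(1\,2)(2\,3)\cdots$ always yield a single long cycle rather than a multi-part partition) or to leave the global product disconnected. Arranging both requirements at once—each block hitting its prescribed partition while the union of all supports forms a connected, spanning configuration with first Betti number $2g$—is the genuinely combinatorial heart of the argument, and is where the careful inductive bookkeeping of Edmonds--Kulkarni--Stong is needed. I would also dispose of the degenerate cases (such as $n=1$, where the hypotheses force $d=1$) directly.
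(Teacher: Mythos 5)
The paper offers no proof of this statement at all: it is imported verbatim from Edmonds--Kulkarni--Stong as \cite[Proposition 5.2]{Edmonds}, so there is no internal argument to compare yours against. That said, your framing is the correct and standard one. The dictionary between branched coverings of $\mathbb{P}^1$ and transitive tuples $(\sigma_1,\dots,\sigma_n)$ in $S_d$ with $\sigma_1\cdots\sigma_n=\mathrm{id}$ and cycle types $\Pi_i$ is the Riemann existence theorem; taking $\sigma_n$ to be a $d$-cycle does make transitivity automatic; and your translation of the hypotheses, via $e_i=d-\ell(\Pi_i)$, into $\sum_{i<n}e_i\ge d-1$ and $\sum_{i<n}e_i\equiv d-1\pmod 2$ is exactly right and correctly identified as the necessary sign and subadditivity conditions.

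The gap is that the proposal stops precisely where the theorem's content begins. The assertion to be proved is that these two necessary conditions are \emph{sufficient} for factoring a $d$-cycle into permutations of the prescribed cycle types, and you explicitly defer that step to the ``careful inductive bookkeeping of Edmonds--Kulkarni--Stong.'' Neither of your two suggested routes is actually executed: the block-of-transpositions construction is the very thing you concede can fail naively (a block of consecutive transpositions need not multiply to the prescribed partition, and the global product need not be a single $d$-cycle), and the ``merge simple branch points'' route begs the same question of how to collide transpositions so that each collision produces exactly $\Pi_i$ while preserving connectivity and the Euler characteristic count. A complete argument needs a genuine induction --- for instance on $n$ or on the total defect, built on a two-factor lemma characterizing when a $d$-cycle equals $\alpha\beta$ with prescribed cycle types of $\alpha$ and $\beta$, followed by a splitting/associativity step --- none of which is supplied. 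As written, the proposal is a correct reduction of the problem plus an accurate description of the remaining difficulty, not a proof of the theorem.
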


The next two results are corollaries of Theorem \ref{thm-EKS}.

\begin{lem} \label{lem-ABT-realiz-sph-1}
The abstract branch triplet $\mathcal{T}=(3,3,\{[3],[1,2],[1,2]\})$ is realizable on a Riemann surface homeomorphic to the sphere.
\end{lem}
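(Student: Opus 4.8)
The plan is to apply Theorem~\ref{thm-EKS} directly, after verifying that its hypotheses hold for $\mathcal{T}=(3,3,\{[3],[1,2],[1,2]\})$, and then to use the Riemann--Hurwitz relation \eqref{defn-comp-BT} to pin down the genus of the domain. First I would record the relevant numerical data: with $d=n=3$, the partition $[3]$ has length $1$ and each of the two copies of $[1,2]$ has length $2$, so the total length is $m = 1 + 2 + 2 = 5$.

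Next I would check the three conditions in Theorem~\ref{thm-EKS}. The partition $[3]$ has length one, so the first hypothesis is met. The quantity $nd - m = 3\cdot 3 - 5 = 4$ is even, which gives the parity condition. Finally, the inequality $nd - m \geq 2d-2$ reads $4 \geq 4$, so it holds. With all three hypotheses satisfied, Theorem~\ref{thm-EKS} immediately yields that $\mathcal{T}$ is realizable on some connected, closed, orientable Riemann surface $M$.

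It remains to identify $M$ as the sphere. Since $\mathcal{T}$ is realized on $M$, it is compatible with $M$, so equation \eqref{defn-comp-BT} gives $\chi(M) = m + d(2-n) = 5 + 3(2-3) = 2$. A connected, closed, orientable surface with Euler characteristic $2$ is homeomorphic to $\mathbb{S}^2$, so $M$ is a Riemann surface homeomorphic to the sphere, as claimed. The only delicate point is the boundary case in the third hypothesis, where the inequality $nd-m \geq 2d-2$ is satisfied with equality and hence leaves no room to spare; but as Theorem~\ref{thm-EKS} is stated with a non-strict inequality, this causes no difficulty, and the argument goes through.
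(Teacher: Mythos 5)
Your proposal is correct and follows exactly the route the paper intends: the lemma is stated there as a corollary of Theorem~\ref{thm-EKS}, and your verification of its hypotheses ($m=5$, $nd-m=4$ even, $4\geq 2d-2=4$) together with the Riemann--Hurwitz computation $\chi(M)=m+d(2-n)=2$ to identify the covering surface as the sphere is precisely the argument the paper leaves implicit. No gaps.
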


\begin{thm} \label{thm-BT-realizable}
 Let $S$ be a connected, closed, orientable surface of genus $g\geq 1$ and let $d=2g+1$. Then the abstract branch triplet $\mathcal{T}=(d,3,\{[d],[d],[d]\})$ is realizable on a Riemann surface that is homeomorphic to  $S$.
\end{thm}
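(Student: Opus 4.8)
The plan is to verify that the triplet $\mathcal{T}=(d,3,\{[d],[d],[d]\})$ satisfies all the hypotheses of Theorem~\ref{thm-EKS}, apply that theorem to obtain realizability on some connected, closed, orientable Riemann surface, and then argue that the surface so obtained must in fact be homeomorphic to $S$. Since the statement is flagged as a corollary of Theorem~\ref{thm-EKS}, the heart of the proof is really a bookkeeping check of the numerical conditions.

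First I would compute the total length of $\mathcal{T}$. Each partition $[d]$ is the single-part partition and hence has length $1$, so the total length is $m=3$. Next I would record the two numerical conditions of Theorem~\ref{thm-EKS}. With $n=3$ and $d=2g+1$, the quantity $nd-m$ equals $3(2g+1)-3=6g$, which is even for every integer $g$; and the required inequality $nd-m \geq 2d-2$ reads $6g \geq 4g$, which holds for all $g \geq 1$. Finally, each partition in $\Pi$ has length one, so in particular the length of one of them is one. Thus all three hypotheses of Theorem~\ref{thm-EKS} are met, and that theorem produces a branched covering $p \colon M \to \mathbb{P}^1$ whose branch triplet is $\mathcal{T}$, where $M$ is \emph{some} connected, closed, orientable Riemann surface.

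The one point that needs care is pinning down the topological type of $M$. Here I would use the fact that every realizable triplet is compatible with the surface on which it is realized. Since $p$ realizes $\mathcal{T}$ on $M$, equation~\eqref{defn-comp-BT} gives $\chi(M)-3=d(2-3)=-d$, so that $\chi(M)=3-d=2-2g$. Because $M$ is a connected, closed, orientable surface, its Euler characteristic determines its genus, whence $M$ has genus $g$ and is therefore homeomorphic to $S$. This completes the argument.

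I do not expect a genuine obstacle here, as the proof is essentially a verification of the hypotheses of Theorem~\ref{thm-EKS} together with the Riemann--Hurwitz constraint. The only subtlety worth flagging is the final step: Theorem~\ref{thm-EKS} guarantees realizability only in the abstract sense, on \emph{some} surface, so one must separately confirm via equation~\eqref{defn-comp-BT} that the genus of the realizing surface is forced to equal $g$.
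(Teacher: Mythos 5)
Your proposal is correct and follows exactly the route the paper intends: the paper states this result as a corollary of Theorem~\ref{thm-EKS} without spelling out the details, and your verification of the three hypotheses ($m=3$, $nd-m=6g$ even, $6g\geq 4g=2d-2$, and each partition $[d]$ having length one) together with the Riemann--Hurwitz computation $\chi(M)=3-d=2-2g$ to pin down the genus is precisely the omitted bookkeeping. No gaps.
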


In \cite[Theorem $12$]{Baranski}, Bara\'{n}ski proved the following (see also \cite[Proposition $2.11$]{Pervova-Petronio-I}).

\begin{thm} \label{thm-Bar} (\cite[Theorem $12$]{Baranski}) Let $\mathcal{T}=(d,n,\Pi)$ be an abstract branch triplet with $n \geq d$. Suppose that $\mathcal{T}$ is compatible with the topological $2$-sphere $\mathbb{S}^2$. Then $\mathcal{T}$ is realizable by a Riemann surface homeomorphic to $\mathbb{S}^2$.
\end{thm}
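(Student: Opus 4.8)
The plan is to translate the statement into the language of permutation factorizations and then argue by induction on the degree $d$. First I would invoke the classical correspondence (the Riemann existence theorem) between branched coverings $p\colon M \to \mathbb{P}^1$ of degree $d$ with branch points $y_1,\dots,y_n$ and tuples $(\sigma_1,\dots,\sigma_n)$ of permutations in the symmetric group $S_d$, taken up to simultaneous conjugacy, such that $\sigma_1\cdots\sigma_n = \mathrm{id}$ and $\langle \sigma_1,\dots,\sigma_n\rangle$ acts transitively on $\{1,\dots,d\}$; here $M$ is connected exactly because the group is transitive, the cycle type of $\sigma_i$ is the partition $\Pi_i$, and the genus of $M$ is read off from Riemann--Hurwitz (Theorem \ref{RH-form-alt}). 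Writing $\ell_i$ for the length of $\Pi_i$ and $c_i := d-\ell_i$ for its \emph{defect}, compatibility of $\mathcal{T}$ with $\mathbb{S}^2$ (equation \eqref{defn-comp-BT} with $\chi=2$) is exactly the condition $\sum_{i=1}^n c_i = 2d-2$, which is precisely the condition that any transitive tuple with these cycle types produces a genus-$0$ surface. Hence it suffices to produce \emph{one} such tuple; realizability on the sphere, rather than on some other surface, is then automatic.

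Next I would extract the combinatorial leverage provided by the hypothesis $n \geq d$. Since each $\Pi_i$ is nontrivial, $c_i \geq 1$; combined with $\sum c_i = 2d-2$ and $n \geq d$ this forces at least $2(n-d+1) \geq 2$ of the partitions to have defect $1$, that is, to equal $[2,1,\dots,1]$ (a transposition). A parallel count shows that at most two of the $\Pi_i$ can be fixed-point-free (have all parts $\geq 2$): such a partition of $d$ has defect at least $\lceil d/2\rceil$, so three of them would exhaust the total defect and force $n < d$. These two facts --- an abundant supply of transpositions and at most two ``rigid'' fixed-point-free partitions --- are what make the inductive step possible.

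The heart of the argument is then an induction on $d$. Given a transposition $\Pi_n = [2,1,\dots,1]$, I would build a companion datum $\mathcal{T}' = (d-1, n-1, \Pi')$ by deleting one sheet, removing the transposition, and shortening a cycle of a suitably chosen neighboring partition $\Pi_j$ by one. In covering-space terms this is the elementary surgery that removes a simple branch point and lowers the degree by one; in permutation terms one factors the designated transposition out and records the result in $S_{d-1}$. The arithmetic is arranged so that the total defect drops by exactly $2$, keeping $\mathcal{T}'$ compatible with $\mathbb{S}^2$, while $n-1 \geq d-1$ is preserved; one then applies the induction hypothesis to realize $\mathcal{T}'$ and reverses the surgery by reinserting the sheet into the cycle of $\sigma_j$ and appending the transposition $\sigma_n$, an operation that leaves the product equal to the identity and visibly preserves transitivity. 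The base cases $d \leq 2$ are immediate, any datum with a full cycle $\Pi_i = [d]$ is disposed of directly by Theorem \ref{thm-EKS} (its total defect $2d-2$ is even and meets the bound), and the purely-transpositional datum ($n = 2d-2$ with every $\Pi_i$ a transposition) is realized by an explicit telescoping factorization of a $d$-cycle against its inverse.

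The main obstacle is the bookkeeping in this reduction--stabilization step: one must choose the deleted sheet and the partition $\Pi_j$ so that every partition of $\Pi'$ remains nontrivial and so that transitivity survives both the deletion and the reinsertion. The hard part will be the extremal configuration in which two partitions are fixed-point-free, since then a single transposition-removal cannot clear both rigid partitions at once and the move must be organized to interact with both simultaneously. Verifying that $n \geq d$ always supplies enough transpositions and enough fixed points to carry out a legal move --- and that the move never manufactures a trivial partition --- is where the genuine work lies, and it is exactly the point at which the counts of the second step must be invoked.
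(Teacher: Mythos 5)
First, a point of comparison: the paper does not prove this statement at all --- it imports it verbatim from Bara\'nski (\cite[Theorem 12]{Baranski}, with \cite[Proposition 2.11]{Pervova-Petronio-I} as an alternative source) --- so your attempt can only be judged on its own merits, and on those merits it is a sketch with its hardest case deferred rather than a complete proof. The parts you do carry out are essentially correct: the Riemann existence translation is right, compatibility with $\mathbb{S}^2$ is exactly $\sum_{i}(d-\ell_i)=2d-2$, so genus $0$ is automatic for any transitive tuple with the prescribed cycle types; the count $t \geq 2(n-d+1)\geq 2$ of transpositions is right, as is the bound of at most two fixed-point-free partitions; and the stabilization (insert the new sheet into a cycle of $\tau_j$ via a transposition $t=(x\,d)$, observe that $\sigma_1\cdots\sigma_{n-1}$ then equals a transposition $(y\,d)$ which you append as $\sigma_n$, and note that transitivity survives because the join of the cycle partitions only coarsens) is sound. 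One imprecision you should repair: ``deleting one sheet'' must also delete a part equal to $1$ from \emph{every} partition other than $\Pi_j$ and the removed transposition, since otherwise the $\Pi_i'$ are not partitions of $d-1$; your defect arithmetic ($-1$ for the removed transposition, $-1$ for the shortened part, $0$ for each deleted fixed point) only balances under that reading, and this is precisely why fixed-point-free partitions obstruct the move --- your later discussion shows you know this, but the move as literally stated omits it.

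The genuine gap is the one you flag yourself: the configuration with two fixed-point-free partitions is left as ``where the genuine work lies,'' and without it the induction does not close. The good news is that your own counting already pins this case down completely, so the gap is fillable in a few lines: if two partitions are fixed-point-free and $n\geq d$, then $2\lceil d/2\rceil + (n-2) \leq 2d-2$ forces $d$ even, $n=d$, both rigid partitions equal to $[2,2,\dots,2]$, and all remaining $d-2$ partitions transpositions. This unique extremal datum is realizable directly, with no induction: take $\alpha=(1\,2)(3\,4)\cdots(d{-}1\,d)$ and $\beta=(2\,3)(4\,5)\cdots(d\,1)$, whose product is the rotation by two, of cycle type $[d/2,\,d/2]$; write $(\alpha\beta)^{-1}$ as a product of $(d/2-1)+(d/2-1)=d-2$ transpositions by telescoping each cycle. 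The tuple multiplies to the identity, generates a transitive (dihedral-containing) group, has the prescribed cycle types, and has total defect $d/2+d/2+(d-2)=2d-2$, hence genus $0$. With that case supplied --- and with the single-fixed-point-free case handled by always choosing $\Pi_j$ to be the fixed-point-free partition --- your induction goes through; as submitted, however, the proposal stops short of a proof exactly at the extremal configuration that makes the theorem sharp.
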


\section{Different Notions of Complexity} \label{sect-diff-not-comp}

We define several notions of complexity: one for branched coverings, another for Riemann surfaces and the last one for topological surfaces. The first two concepts were essentially introduced in \cite{CC-RR-BAMS}. 

\begin{defn} \label{defn-comp-BC} Let $p \colon M \to \mathbb{P}^1$ be a branched covering of degree $d$ of the  (connected, closed, orientable) Riemann surface $M$ to the Riemann sphere $\mathbb{P}^1$. Let $B \subset \mathbb{P}^1$ be the branching set of $p$. Then the complement $\mathbb{P}^1 \setminus B$ of $B$ in $\mathbb{P}^1$ inherits the structure of a Riemann surface from the Riemann sphere $\mathbb{P}^1$.
 
\begin{enumerate}
 \item We call the branched covering $p \colon M \to \mathbb{P}^1$ \emph{hyperbolic} if $\mathbb{P}^1 \setminus B$ admits a hyperbolic structure. In this case, we denote the hyperbolic area of $\mathbb{P}^1 \setminus B$ by $\mathcal{A}(\mathbb{P}^1 \setminus B)$.
 \item We define the \emph{complexity} $C_\text{Cov}(p)$ of the branched covering $p$ as follows:
\[C_\text{Cov}(p)=
\left\{
	\begin{array}{cl}
		d \cdot \mathcal{A}(\mathbb{P}^1 \setminus B)   & \mbox{if } p \text{ is hyperbolic},\\
		\infty  & \mbox{otherwise}.	
	\end{array}
\right.\]
\end{enumerate}

\end{defn}

\begin{rem} \label{rem-hyp-n-geq-3} The Riemann surface $\mathbb{P}^1 \setminus B$ admits a hyperbolic structure if and only if the cardinality of the set $B$ is greater than or equal to $3$ (see \cite[Theorem 27.12]{Forster}). In other words, a $(d,n)$-branched covering is hyperbolic if and only if $n \geq 3$. 
\end{rem}

Definition \ref{defn-comp-BC} allows us to extend the definition of complexity given in \cite{CC-RR-BAMS} to all $(d,n)$-branched coverings, including the ones that are not hyperbolic.

The complexity of a hyperbolic branched covering is a mixture of a topological invariant, the degree, and a geometric invariant, the hyperbolic area of the complement of the  branching set in $\mathbb{P}^1$. Thus, we can use the Gauss-Bonnet theorem to find an explicit formula for the complexity of a hyperbolic branched covering. [For a statement of the  Gauss-Bonnet theorem, see for example \cite[Theorem V.2.7]{Chavel}.] This is done in the following lemma.


\begin{lem} \label{lem-area-complex} \hfill
\begin{enumerate}
 \item The hyperbolic area of the complement of $n \geq 3$ points in the Riemann sphere equals $2\pi(n-2)$. 
 \item  The complexity of a $(d,n)$-branched covering with $n \geq 3$ is finite and equal to  $2\pi d (n-2)$.
\end{enumerate}
\end{lem}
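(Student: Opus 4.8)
The plan is to treat the two parts in turn, with part (1) carrying essentially all of the content and part (2) following immediately by unwinding definitions.

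For part (1), the starting point is purely topological: if $B \subset \mathbb{P}^1$ consists of $n$ points, then $\mathbb{P}^1 \setminus B$ is a sphere with $n$ punctures, which deformation retracts onto a wedge of $n-1$ circles, so its Euler characteristic is $\chi(\mathbb{P}^1 \setminus B) = 2 - n$. Since $n \geq 3$, Remark \ref{rem-hyp-n-geq-3} guarantees that this surface carries a complete, finite-area hyperbolic structure, that is, a metric of constant Gaussian curvature $K = -1$. I would then invoke the Gauss-Bonnet theorem, which for such a metric reads $\int_{\mathbb{P}^1 \setminus B} K \, dA = 2\pi \, \chi(\mathbb{P}^1 \setminus B)$. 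Substituting $K = -1$ and $\chi = 2 - n$ gives $-\mathcal{A}(\mathbb{P}^1 \setminus B) = 2\pi(2-n)$, hence $\mathcal{A}(\mathbb{P}^1 \setminus B) = 2\pi(n-2)$, as claimed.

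Part (2) is then a direct combination of part (1) with Definition \ref{defn-comp-BC}. A $(d,n)$-branched covering $p$ with $n \geq 3$ has a branching set $B$ of cardinality $n$, so by Remark \ref{rem-hyp-n-geq-3} the covering is hyperbolic and $C_{\text{Cov}}(p) = d \cdot \mathcal{A}(\mathbb{P}^1 \setminus B)$. Plugging in the area computed in part (1) yields $C_{\text{Cov}}(p) = 2\pi d (n-2)$, which is finite since $d$ and $n$ are finite positive integers.

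The one genuine subtlety, and the step I would be most careful about, is that $\mathbb{P}^1 \setminus B$ is noncompact, so the Gauss-Bonnet statement for closed surfaces does not literally apply: the hyperbolic structure here is complete of finite area, with a cusp at each of the $n$ punctures. I would justify the area formula by exhausting the surface with compact subsurfaces obtained by truncating each cusp along a horocycle. For each such truncation $\Sigma$, the boundary version of Gauss-Bonnet reads $\int_{\Sigma} K \, dA + \int_{\partial \Sigma} k_g \, ds = 2\pi \, \chi(\Sigma)$, and truncating cusps does not change the homotopy type, so $\chi(\Sigma) = \chi(\mathbb{P}^1 \setminus B) = 2 - n$. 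Each horocyclic boundary curve has geodesic curvature of constant magnitude one and length tending to zero as the horocycles recede into the cusps, so the boundary integral vanishes in the limit while $\int_{\Sigma} K \, dA$ converges to $\int_{\mathbb{P}^1 \setminus B} K \, dA$. This recovers $\int_{\mathbb{P}^1 \setminus B} K \, dA = 2\pi(2-n)$, after which the remaining computation is immediate.
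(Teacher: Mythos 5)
Your proof is correct and follows essentially the same route as the paper: both parts reduce to the Gauss--Bonnet computation $\mathcal{A}(\mathbb{P}^1 \setminus B) = -2\pi\chi(\mathbb{P}^1\setminus B) = 2\pi(n-2)$ followed by unwinding Definition \ref{defn-comp-BC}. The one difference is that you explicitly justify Gauss--Bonnet on the noncompact cusped surface via horocyclic truncation, a point the paper handles implicitly by citing a reference; this is a welcome extra level of care rather than a divergence in method.
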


\begin{proof} \hfill
\begin{enumerate}
 \item Let $\mathcal{A}(M_n)$ denote the hyperbolic area of the complement $M_n$ of $n \geq 3$ points in the Riemann sphere $\mathbb{P}^1$. It follows from the Gauss-Bonnet theorem that $\mathcal{A}(M_n)=-2\pi \chi(M_n)=-2\pi (\chi(\mathbb{P}^1)-n)=2\pi (n-2)$.
 \item Let $p \colon M \to \mathbb{P}^1$ be a $(d,n)$-branched covering with $n \geq 3$. Let $B$ be the branching set of the covering $p$. By Remark \ref{rem-hyp-n-geq-3}, $\mathbb{P}^1 \setminus B$ admits a hyperbolic structure. Thus, the complexity $C(p)$ of $p$ is defined as $C(p)=d \cdot \mathcal{A}(\mathbb{P}^1 \setminus B)$, where $\mathcal{A}(\mathbb{P}^1 \setminus B)$ denotes the hyperbolic area of $\mathbb{P}^1 \setminus B$. Using $(1)$, we obtain that $\mathcal{A}(\mathbb{P}^1 \setminus B)=2\pi(n-2)$, and so $C(p)=d \cdot \mathcal{A}(\mathbb{P}^1 \setminus B)=2\pi d (n-2)$.
\end{enumerate}
\end{proof}

Combining Definition \ref{defn-comp-BC}, Remark \ref{rem-hyp-n-geq-3} and Lemma \ref{lem-area-complex}, we obtain the following.

\begin{cor} \label{cor-hyp-BC-comp-fin} For a $(d,n)$-branched covering $p \colon M \to \mathbb{P}^1$, the following statements are equivalent:
\begin{enumerate}
 \item $p \colon M \to \mathbb{P}^1$ is hyperbolic;
 \item $n \geq 3$;
 \item $C_\text{Cov}(p)<\infty$.
\end{enumerate}
\end{cor}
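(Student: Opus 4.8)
The plan is to establish the three-way equivalence by a cyclic chain of implications $(1) \Rightarrow (2) \Rightarrow (3) \Rightarrow (1)$, since each link is an immediate consequence of a result already recorded in this section. The equivalence of hyperbolicity with the condition $n \geq 3$ is essentially contained in Remark \ref{rem-hyp-n-geq-3}, so the only genuine content to supply is the way in which finiteness of the complexity slots into the cycle.

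For $(1) \Rightarrow (2)$, I would simply invoke Remark \ref{rem-hyp-n-geq-3}, which says that a $(d,n)$-branched covering is hyperbolic precisely when $n \geq 3$; in particular, hyperbolicity forces $n \geq 3$. For $(2) \Rightarrow (3)$, I would appeal to Lemma \ref{lem-area-complex}(2): when $n \geq 3$, the complexity equals $2\pi d(n-2)$, which is a finite real number, so $C_\text{Cov}(p) < \infty$. For $(3) \Rightarrow (1)$, I would argue by contraposition using Definition \ref{defn-comp-BC}: if $p$ is not hyperbolic, then by definition $C_\text{Cov}(p) = \infty$, contradicting finiteness; hence $C_\text{Cov}(p) < \infty$ implies that $p$ is hyperbolic, which closes the cycle.

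I do not anticipate any substantive obstacle, since the corollary is a direct synthesis of the preceding definition, remark, and lemma. The only point requiring a moment's care is the step $(3) \Rightarrow (1)$, where one must recall that the complexity is \emph{defined} to take the value $\infty$ exactly in the non-hyperbolic case; this is what makes finiteness of $C_\text{Cov}(p)$ logically equivalent to hyperbolicity rather than merely a consequence of it. With that observation in place, the three implications together yield the stated equivalence.
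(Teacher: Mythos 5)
Your proposal is correct and matches the paper exactly: the paper gives no separate argument, simply stating that the corollary follows by combining Definition \ref{defn-comp-BC}, Remark \ref{rem-hyp-n-geq-3} and Lemma \ref{lem-area-complex}, which is precisely the cyclic chain you spell out. No gaps.
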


Let $\mathscr{C}$ be the set of all complexities of branched coverings of connected, closed, orientable Riemann surfaces to the Riemann sphere. Let $\mathbb{Z} \pi$ be the set of all integer multiples of the number $\pi$. By Lemma \ref{lem-area-complex} and Corollary \ref{cor-hyp-BC-comp-fin}, $\mathscr{C}$ is a subset of $\mathbb{Z} \pi \cup \{\infty\}$. Since $\mathbb{Z} \pi$ is a discrete subset of the real line, then we have the following:

\begin{obs} \label{obs-non-empty-subset-C}
 The infimum of a non-empty subset of $\mathscr{C}$ is attained, although it could be infinite.
\end{obs}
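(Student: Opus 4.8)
The plan is to reduce the statement to the elementary fact that every nonempty subset of $\mathbb{Z}$ that is bounded below possesses a least element, transported to $\mathbb{Z}\pi$ via its discreteness. Let $A \subseteq \mathscr{C}$ be nonempty. I would first dispose of the infinite case: if every element of $A$ equals $\infty$, then $A=\{\infty\}$ and $\inf(A)=\infty \in A$, so the infimum is (trivially) attained. This is exactly the situation anticipated by the phrase ``although it could be infinite.''

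In the remaining case $A$ contains at least one finite value, and I would work with the set $A_0 = A \cap \mathbb{Z}\pi$ of finite elements of $A$. By Corollary \ref{cor-hyp-BC-comp-fin}, every finite element of $\mathscr{C}$ arises as the complexity of a $(d,n)$-branched covering with $n \geq 3$, and by Lemma \ref{lem-area-complex}(2) such a complexity equals $2\pi d(n-2)$. Since $d \geq 1$ and $n-2 \geq 1$, each finite complexity is a positive integer multiple of $\pi$ lying in $[2\pi,\infty)$. In particular $A_0$ is a nonempty subset of $\mathbb{Z}\pi$ that is bounded below.

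I would then exploit the discreteness of $\mathbb{Z}\pi$. The order-preserving bijection $k\pi \mapsto k$ identifies $\mathbb{Z}\pi$ with $\mathbb{Z}$ and carries $A_0$ to a nonempty subset of $\mathbb{Z}$ bounded below; by the well-ordering property this subset has a least element, so $A_0$ itself has a minimum $c_0$. Because every element of $A$ is either equal to $c_0$, a finite value at least $c_0$, or $\infty$, I conclude that $\inf(A)=c_0 \in A_0 \subseteq A$. Hence the infimum is attained and, in this case, finite.

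There is no serious obstacle here; the only points requiring care are the bookkeeping around the symbol $\infty$ and the verification that the finite complexities are bounded below, which is precisely what prevents the infimum from escaping to $-\infty$. The discreteness of $\mathbb{Z}\pi$, already noted just before the statement, does the rest.
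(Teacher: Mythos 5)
Your proof is correct and follows essentially the same route as the paper, which justifies the observation only by the preceding remark that $\mathscr{C}\subseteq\mathbb{Z}\pi\cup\{\infty\}$ with $\mathbb{Z}\pi$ discrete. You additionally make explicit the lower bound $2\pi$ on finite complexities, a detail the paper leaves implicit but which is genuinely needed for the well-ordering argument to apply.
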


We now define the complexity of a Riemann surface. It essentially measures the most economical way (in terms of complexity) in which the Riemann surface can be expressed as a branched covering of the Riemann sphere.

\begin{defn} \label{defn-comp-RS} Let $M$ be a connected, closed, orientable Riemann surface. We define the \emph{complexity} $C_{\text{Riem}}(M)$ of $M$ as the infimum of the set of all complexities of branched coverings of $M$ to the Riemann sphere: \[C_{\text{Riem}}(M)=\inf\{C_{\text{Cov}}(p)\,|\, p \colon M \to \mathbb{P}^1 \text{ is a branched covering}\}.\]
\end{defn}

It is a well-known fact that every compact Riemann surface is a branched covering of the Riemann sphere $\mathbb{P}^1$ (see for example \cite[Theorem 16.11]{Forster}). Hence, given a Riemann surface $M$, the set of all complexities of branched coverings of $M$ to $\mathbb{P}^1$  is not empty. Thus, by Observation \ref{obs-non-empty-subset-C}, the infimum in Definition \ref{defn-comp-RS} is  attained, although it could be infinite. In other words, there exists a branched covering $p \colon M \to \mathbb{P}^1$ such that $C_{\text{Riem}}(M)=C_{\text{Cov}}(p)$.

In  \cite{CC-RR-BAMS}, the authors find a formula for the complexity of a Riemann surface.

\begin{thm} ( \cite[Theorem 5.4]{CC-RR-BAMS}) \label{main-Thm-CC-RR} Let $M$ be a connected, closed, orientable Riemann surface of genus $g \geq 1$. Suppose that the complexity $C_{\text{Riem}}(M)$ of $M$ is finite. Let $m_{\text{min}}$ be the minimum total length of an abstract branch triplet that is realizable on $M$. Then \begin{equation} \label{eqn-stat-main-Thm-CC-RR}                                                                                                                                                                                                                                                                                                                         C_{\text{Riem}}(M)=2\pi(m_{\text{min}}+2g-2).                                                                                                                                                                                                                                                                                                                                  \end{equation}
\end{thm}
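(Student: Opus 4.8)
The plan is to reduce the computation of $C_{\text{Riem}}(M)$ to a purely combinatorial minimization, by observing that once the genus is fixed, the complexity of a branched covering is a strictly increasing affine function of the total length of its branch triplet. The engine behind this reduction is the Riemann--Hurwitz formula (Theorem~\ref{RH-form-alt}).

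First I would record the controlling identity. Let $p \colon M \to \mathbb{P}^1$ be a branched covering with branch triplet $(d,n,\Pi)$ of total length $m$, and let $B$ be its branching set. Then $|B| = n$, and the pre-image $p^{-1}(B)$ consists of exactly $m$ points: over each branch point $y_i$ the fiber has as many points as there are parts in the partition $\Pi_i$, and summing over $i$ gives the total length. Hence Theorem~\ref{RH-form-alt} applies with this value of $m$, and, using $\chi(M) = 2 - 2g$, it rearranges to
$$d(n-2) = m + 2g - 2.$$

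Next I would dispose of the low-$n$ cases. For $g \geq 1$ the surface $M$ must ramify over $\mathbb{P}^1$ (an unramified cover of the simply connected $\mathbb{P}^1$ is trivial, forcing genus $0$), so $n \geq 1$ and therefore $m \geq 1$. If we had $n \leq 2$, then $d(n-2) \leq 0$, and the identity would give $m = d(n-2) - 2g + 2 \leq 2 - 2g \leq 0$, contradicting $m \geq 1$. Thus every branched covering $p \colon M \to \mathbb{P}^1$ has $n \geq 3$, so by Corollary~\ref{cor-hyp-BC-comp-fin} it is hyperbolic with finite complexity; combining Lemma~\ref{lem-area-complex}(2) with the identity yields
$$C_{\text{Cov}}(p) = 2\pi d(n-2) = 2\pi(m + 2g - 2).$$
In particular the finiteness hypothesis on $C_{\text{Riem}}(M)$ is automatic in this range.

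Finally I would assemble the infimum. By Definition~\ref{defn-realizable}, the branch triplets of branched coverings $M \to \mathbb{P}^1$ are precisely the abstract branch triplets realizable on $M$, and the displayed formula shows that the complexity of such a covering depends only on the total length $m$ of its triplet. Since every realizable triplet has $n \geq 3$ by the previous step, the minimum total length $m_{\text{min}}$ is attained by one of these finite-complexity coverings, and
$$C_{\text{Riem}}(M) = \inf\{2\pi(m + 2g - 2)\,|\,(d,n,\Pi) \text{ is realizable on } M\} = 2\pi(m_{\text{min}} + 2g - 2),$$
where $m$ denotes the total length of the triplet and the infimum is a genuine minimum because the total lengths form a nonempty set of positive integers (cf.\ Observation~\ref{obs-non-empty-subset-C}). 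The step requiring the most care is the bookkeeping: correctly identifying $m$ with $|p^{-1}(B)|$ so that Riemann--Hurwitz produces the affine relation, and confirming that the $n \leq 2$ possibilities are genuinely excluded for $g \geq 1$, so that $m_{\text{min}}$ --- a priori an infimum over all realizable triplets --- coincides with the minimum over the hyperbolic (finite-complexity) ones.
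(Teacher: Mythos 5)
Your argument is correct. Note that the paper does not prove this statement itself --- it imports it as \cite[Theorem 5.4]{CC-RR-BAMS} --- but your derivation is a complete and valid proof assembled from exactly the tools the paper sets up: the identification $m=|p^{-1}(B)|$ feeding into Theorem~\ref{RH-form-alt} to get $d(n-2)=m+2g-2$, the exclusion of $n\leq 2$ for $g\geq 1$ (the same computation the paper uses in Proposition~\ref{prop-mmin-bounded}), and Lemma~\ref{lem-area-complex} to convert $2\pi d(n-2)$ into $2\pi(m+2g-2)$, after which the infimum over coverings becomes a minimum over total lengths of realizable triplets. Your observation that the finiteness hypothesis is automatic for $g\geq 1$ is also correct and consistent with the paper's Proposition~\ref{prop-top-comp-fin}.
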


We will need to find a lower bound for the number $m_{min}$ from above. 

\begin{prop} \label{prop-mmin-bounded} The number $m_{\text{min}}$ from equation \ref{eqn-stat-main-Thm-CC-RR} is greater than or equal to $3$.  
\end{prop}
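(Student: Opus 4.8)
The plan is to prove the stronger statement that \emph{every} abstract branch triplet realizable on $M$ has total length at least $3$; the bound $m_{\text{min}} \geq 3$ then follows at once by taking the infimum. The hypothesis that $C_{\text{Riem}}(M)$ is finite is needed only to guarantee that realizable triplets on $M$ exist, so that $m_{\text{min}}$ is well defined. The key idea is that realizability forces compatibility, and the compatibility relation, together with the hypothesis $g \geq 1$, cannot be satisfied by a triplet with fewer than three branch points.

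First I would fix an arbitrary abstract branch triplet $\mathcal{T} = (d,n,\Pi)$ realizable on $M$, with total length $m$. Since every realizable triplet is compatible with $M$, equation~\eqref{defn-comp-BT} gives $\chi(M) - m = d(2-n)$; substituting $\chi(M) = 2 - 2g$ and rearranging yields $m = (2-2g) + d(n-2)$. I would also record the elementary bound $m \geq n$, which holds because $\Pi$ consists of $n$ partitions of $d$, each of length at least $1$.

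The heart of the argument is to show that $n \geq 3$, which I would do by ruling out the cases $n=1$ and $n=2$ directly from the relation above, using $d \geq 1$ and $g \geq 1$. When $n = 1$ the relation forces $m = 2 - 2g - d \leq -1$, contradicting $m \geq 1$; when $n = 2$ it forces $m = 2 - 2g \leq 0$, contradicting $m \geq 2$. Hence $n \geq 3$, and combining this with $m \geq n$ gives $m \geq 3$, as desired.

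I do not anticipate a substantive obstacle: once the compatibility relation is in hand, the argument is purely arithmetic. The only point meriting a little care is confirming that the small cases are genuinely excluded rather than merely atypical, but for $g \geq 1$ the quantity $(2-2g) + d(n-2)$ is simply too small to equal a sum of $n \leq 2$ positive integers, so both cases collapse cleanly. (As a consistency check, the conclusion $n \geq 3$ matches Corollary~\ref{cor-hyp-BC-comp-fin}, which identifies $n \geq 3$ with the finiteness of the complexity.)
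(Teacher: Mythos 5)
Your proposal is correct and follows essentially the same route as the paper: both reduce the claim to showing $n \geq 3$ for any realizable triplet via the compatibility relation $m - \chi(M) = d(n-2)$ together with the bound $m \geq n$. The only cosmetic difference is that you rule out $n=1$ and $n=2$ by explicit case analysis, whereas the paper observes directly that $m - \chi(M) > 0$ forces $d(n-2) > 0$.
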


\begin{proof} Let $\mathcal{T}=(d,n,\Pi)$ be an abstract branch triplet that is realizable on $M$. We prove that the total length of $\mathcal{T}$ is always greater than or equal to three. Let
$p \colon M \to \mathbb{P}^1$ be the $(d,n)$-branched covering associated to $\mathcal{T}$.

Suppose that $y_1,y_2,\cdots y_n$ are all the distinct branch points of the branched covering $p$. For each $i=1,2,\cdots,n$,
\begin{enumerate}
 \item let $\Pi_i$ be the partition of $d$ given by the ramification indices of the points in $p^{-1}(y_i)$;
 \item let $m_i$ be the length of the partition $\Pi_i$.
\end{enumerate}

Since $m=m_1+m_2+\cdots+m_n$ and each $m_i \geq 1$ ($i=1,2,\cdots,n$), then $m \geq n$. It remains to prove that $n \geq 3$.

Since the abstract branch triplet $\mathcal{T}$ is compatible with $M$, then
\begin{equation} \label{eqn-prop-mmin-1}
m-\chi(M)=d(n-2).
\end{equation}

Also, $m>0$, $\chi(M)=2-2g \leq 0$ and $d>0$, so equation \eqref{eqn-prop-mmin-1} implies that  $n-2>0$, i.e., $n \geq 3$.
\end{proof}

The next definition introduces the simplest type of branched coverings that we will study in this article.

\begin{defn} \label{defn-hyperelliptic}
A compact Riemann surface $M$  is called \emph{hyperelliptic} if there exists a double branched covering $p \colon M \to \mathbb{P}^1$.
\end{defn}

Notice that our definition allows the existence of hyperelliptic Riemann surfaces of genus $0$ and $1$. 

Applying the Riemann-Hurwitz formula to a double branched covering we obtain the following.

\begin{lem} \label{lem-RH-hyp-surf}
 Let $p \colon M \to \mathbb{P}^1$ be a $(2,n)$-branched covering. If the genus of $M$ is equal to $g \geq 0$, then $n=2g+2$.
\end{lem}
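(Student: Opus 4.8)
The plan is to apply the Riemann–Hurwitz formula (Theorem~\ref{RH-form-alt}) after pinning down the combinatorial structure of a degree-two covering.

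First I would analyze the fibers over the branch points. Since $p$ has degree $d=2$, by Lemma~\ref{lem-degree} the ramification indices of the points in any fiber $p^{-1}(y)$ form a partition of $2$. The only partitions of $2$ are the trivial partition $[1,1]$ and the partition $[2]$. Over a branch point the partition must be nontrivial, so each of the $n$ branch points $y_1,\dots,y_n$ has fiber consisting of a single ramification point of ramification index $2$; in particular $|p^{-1}(y_i)|=1$ for every $i$. Therefore the cardinality of the full preimage of the branching set $B$ is
\[
 m := |p^{-1}(B)| = \sum_{i=1}^{n} |p^{-1}(y_i)| = n.
\]

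Next I would substitute into the Riemann–Hurwitz formula. With $\chi(M)=2-2g$, $\chi(\mathbb{P}^1)=2$, $d=2$, and $m=n$, Theorem~\ref{RH-form-alt} yields
\[
 (2-2g)-n = 2\bigl(2-n\bigr).
\]
Solving this linear equation for $n$ gives $n=2g+2$, as claimed.

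This argument is essentially immediate, and I do not anticipate any genuine obstacle. The only point requiring care is the observation that a branch point of a degree-two covering necessarily carries the partition $[2]$ rather than $[1,1]$; this is what forces the fiber count $m=n$ and removes the ambiguity in the Riemann–Hurwitz formula. Once $m=n$ is established, the conclusion follows from a one-line computation.
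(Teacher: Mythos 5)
Your proof is correct and follows exactly the route the paper indicates (the paper states only that the lemma is obtained by ``applying the Riemann--Hurwitz formula to a double branched covering,'' omitting the details you supply). Your key observation that each branch point of a degree-two covering carries the partition $[2]$, so that $m=n$ in Theorem~\ref{RH-form-alt}, is precisely the step needed to make that application work.
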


\begin{notn}
Given a connected, closed, orientable surface $S$, we will use the symbol $\mathbb{X}_S$ to denote the set of all Riemann surfaces that are homeomorphic to $S$.
\end{notn}

The following is a well-known result (see for example \cite{Forster}).

\begin{prop} \label{prop-XS-hyper}
 Given a connected, closed, orientable surface $S$, there exists a hyperelliptic Riemann surface in $\mathbb{X}_S$.
\end{prop}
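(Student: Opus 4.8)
The plan is to realize $S$ explicitly as the domain of a degree-two branched covering of $\mathbb{P}^1$ by exhibiting a suitable abstract branch triplet and invoking the realizability results already established. Write $g$ for the genus of $S$. By Lemma \ref{lem-RH-hyp-surf}, any double branched covering whose domain has genus $g$ must branch over exactly $2g+2$ points, each carrying a single ramification point of index $2$ in its fiber. This dictates the candidate branch triplet
\[
\mathcal{T} = \bigl(2,\, 2g+2,\, \{\,[2],[2],\ldots,[2]\,\}\bigr),
\]
with $2g+2$ copies of the partition $[2]$. I will show that $\mathcal{T}$ is a realizable abstract branch triplet and that any realization automatically has a domain homeomorphic to $S$; this produces the desired hyperelliptic surface in $\mathbb{X}_S$.

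First I would check that $\mathcal{T}$ is a bona fide abstract branch triplet: $d=2$ and $n=2g+2$ are positive integers, and each partition $[2]$ of $d=2$ is distinct from the trivial partition $[1,1]$. The total length of $\mathcal{T}$ is $m = 2g+2$, since every partition $[2]$ has length one. A short computation confirms compatibility with $S$ in the sense of equation \eqref{defn-comp-BT}: with $\chi(S)=2-2g$ one has $\chi(S)-m = (2-2g)-(2g+2) = -4g = 2\bigl(2-(2g+2)\bigr) = d(2-n)$. Next I would apply Theorem \ref{thm-EKS}. Its hypotheses hold for $\mathcal{T}$: at least one partition (in fact each one) has length one; the quantity $nd-m = 2(2g+2)-(2g+2) = 2g+2$ is even; and $nd-m = 2g+2 \geq 2 = 2d-2$ for every $g \geq 0$. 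Hence $\mathcal{T}$ is realizable, i.e., there is a branched covering $p\colon M \to \mathbb{P}^1$ with branch triplet $\mathcal{T}$.

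It remains to identify the homeomorphism type of the domain $M$, and this is where the Riemann--Hurwitz formula does the work for free: since the branch triplet $\mathcal{T}$ is realized on $M$, Theorem \ref{RH-form-alt} forces $\chi(M) = m + d(2-n) = (2g+2) + 2(-2g) = 2-2g$, so $M$ is a connected, closed, orientable surface of genus $g$, hence homeomorphic to $S$. Because $p$ has degree $2$, the surface $M$ is hyperelliptic by Definition \ref{defn-hyperelliptic}, and $M \in \mathbb{X}_S$, as required. The one point that needs care is the boundary case $g=0$, where $n=2$ and one should confirm that Theorem \ref{thm-EKS} is not being applied vacuously; this is harmless, because the explicit map $z \mapsto z^2$ of $\mathbb{P}^1$ to itself, branched over $0$ and $\infty$, realizes $\mathcal{T}$ directly. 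More generally, the smooth projective model of the affine curve $y^2 = \prod_{i=1}^{2g+2}(x-a_i)$ (for distinct $a_i \in \mathbb{C}$) gives an entirely concrete double covering of $\mathbb{P}^1$ of genus $g$, serving as an independent check of the construction. I do not anticipate a serious obstacle; the only subtlety is confirming that realizability of $\mathcal{T}$ pins down the genus of the domain, which the Riemann--Hurwitz formula guarantees.
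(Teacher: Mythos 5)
Your argument is correct, but it takes a different route from the paper: the paper offers no proof at all for this proposition, simply labelling it ``well-known'' and citing \cite{Forster}, where the standard construction is the smooth projective model of an affine curve $y^2=\prod_{i=1}^{2g+2}(x-a_i)$ --- essentially the ``independent check'' you mention at the end. What you do instead is derive the existence of a hyperelliptic surface of each genus from the combinatorial machinery already set up in the paper: you write down the forced branch triplet $\bigl(2,\,2g+2,\,\{[2],\dots,[2]\}\bigr)$, verify the hypotheses of Theorem \ref{thm-EKS} (each partition has length one, $nd-m=2g+2$ is even and at least $2d-2=2$), and then use Theorem \ref{RH-form-alt} to compute $\chi(M)=2-2g$ and so pin down the homeomorphism type of the domain. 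All the steps check out, including the genus-zero case, and the logic is not circular: Lemma \ref{lem-RH-hyp-surf} is used only as motivation for the choice of triplet, not as an ingredient of the existence proof. What your approach buys is a self-contained proof inside the paper's own framework, at the cost of invoking the (nontrivial) Edmonds--Kulkarni--Stong realizability theorem for something that has a completely elementary explicit construction; the paper's citation route is shorter and also yields the concrete algebraic models, which your argument produces only as an afterthought.
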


We now define a notion of complexity for topological surfaces.

\begin{defn} \label{defn-top-comp} Let $S$ be a connected, closed, orientable surface. We define the \emph{topological complexity}, $C_{\text{top}}(S)$, of $S$ as the infimum of the set of all complexities of the Riemann surfaces in $\mathbb{X}_S$: \[C_{\text{top}}(S)=\inf\{C_{\text{Riem}}(M)\,|\, M \in \mathbb{X}_S\}.\]
\end{defn}

Given a connected, closed, orientable surface $S$, Proposition \ref{prop-XS-hyper} shows that the set of all complexities of the Riemann surfaces in $\mathbb{X}_S$ is not empty. Thus, by Observation \ref{obs-non-empty-subset-C}, the infimum in Definition \ref{defn-top-comp} is  attained, although it could be infinite. In other words, there exists a Riemann surface $M \in \mathbb{X}_S$ such that $C_{\text{top}}(S)=C_{\text{Riem}}(M)$.

We devote the rest of this section to prove that the topological complexity of a surface is always finite. We start with the case of surfaces with positive genus.

\begin{prop} \label{prop-top-comp-fin} The topological complexity of a connected, closed, orientable surface of genus $g\geq 1$ is finite.
\end{prop}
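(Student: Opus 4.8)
The plan is to exhibit a single Riemann surface in $\mathbb{X}_S$ with finite complexity; this suffices, since $C_{\text{top}}(S)$ is defined as an infimum over $\mathbb{X}_S$. By Definition \ref{defn-comp-RS} together with Corollary \ref{cor-hyp-BC-comp-fin}, it is enough to produce one branched covering $p \colon M \to \mathbb{P}^1$ with $M$ homeomorphic to $S$ and with at least three branch points, since any such covering has finite complexity.

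The key input is Theorem \ref{thm-BT-realizable}. Setting $d = 2g+1$, that result guarantees that the abstract branch triplet $\mathcal{T}=(d,3,\{[d],[d],[d]\})$ is realizable on a Riemann surface $M$ that is homeomorphic to $S$. I would take $M$ to be such a surface and $p \colon M \to \mathbb{P}^1$ the corresponding $(d,3)$-branched covering whose branch triplet is $\mathcal{T}$.

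Since this covering has $n = 3$ branch points, Corollary \ref{cor-hyp-BC-comp-fin} shows that $p$ is hyperbolic and that $C_{\text{Cov}}(p) < \infty$. Lemma \ref{lem-area-complex}(2) then computes the complexity explicitly as $C_{\text{Cov}}(p) = 2\pi d(n-2) = 2\pi(2g+1)$. Consequently $C_{\text{Riem}}(M) \leq C_{\text{Cov}}(p) = 2\pi(2g+1)$, and passing to the infimum over $\mathbb{X}_S$ yields $C_{\text{top}}(S) \leq C_{\text{Riem}}(M) \leq 2\pi(2g+1) < \infty$.

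There is no genuine obstacle here beyond assembling the already-established pieces: the realizability of the triplet $(d,3,\{[d],[d],[d]\})$ provided by Theorem \ref{thm-BT-realizable} does the heavy lifting, and the remainder is a direct application of the area formula. The only point to check is that the chosen covering really has three branch points so that Corollary \ref{cor-hyp-BC-comp-fin} applies, and this is immediate from the triplet having $n = 3$. I note that the resulting bound $2\pi(2g+1)$ already matches the value claimed for $C_{\text{top}}(S)$ in Main Theorem \ref{main-thm-intro-top-comp}, so this construction will also serve as the upper-bound half of that theorem.
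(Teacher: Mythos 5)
Your proof is correct, but it takes a different route from the paper's. The paper's proof of this proposition avoids the Hurwitz-problem machinery entirely: it invokes Proposition \ref{prop-XS-hyper} to get a hyperelliptic Riemann surface $M \in \mathbb{X}_S$, uses Lemma \ref{lem-RH-hyp-surf} to see that the associated double covering has $n = 2g+2 \geq 4$ branch points when $g \geq 1$, and concludes finiteness from Corollary \ref{cor-hyp-BC-comp-fin}. That argument rests only on the classical existence of hyperelliptic surfaces in every genus and yields the (weaker) bound $8\pi g$. You instead reach for Theorem \ref{thm-BT-realizable} (a consequence of the Edmonds--Kulkarni--Stong realizability theorem) to produce a $(2g+1,3)$-covering with complexity exactly $2\pi(2g+1)$. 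This is heavier machinery than the proposition needs, but it buys you the sharp upper bound: as you note, your construction is precisely the upper-bound half of Theorem \ref{computing-complex}, where the paper itself deploys the same triplet $(d,3,\{[d],[d],[d]\})$. So you have essentially front-loaded part of the main theorem's proof into this preliminary finiteness statement; the paper prefers to keep the finiteness claim cheap and defer the realizability argument to the computation of the exact value. One small caveat: your final sentence asserts the bound ``matches the value claimed'' for $C_{\text{top}}(S)$, but at this stage you have only an upper bound --- the matching lower bound still requires Theorem \ref{main-Thm-CC-RR} and Proposition \ref{prop-mmin-bounded}.
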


\begin{proof}
Let $S$ be a connected, closed, orientable surface of genus $g\geq 1$. By Proposition \ref{prop-XS-hyper}, there exists a hyperelliptic Riemann surface $M \in \mathbb{X}_S$. Let $p \colon M \to \mathbb{P}^1$ be the associated $(2,n)$-branched covering. By Lemma \ref{lem-RH-hyp-surf}, $n=2g+2$. Since $g\geq 1$, then $n=2g+2 \geq 4$, and so $C_\text{Cov}(p)<\infty$ (by Corollary \ref{cor-hyp-BC-comp-fin}). Thus, \[C_{\text{top}}(S) \leq C_{\text{Riem}}(M) \leq C_\text{Cov}(p)<\infty.\]
\end{proof}

\begin{rem}
 The above proof does not work for $g=0$, because, in that case, $n=2$ and so  $C_\text{Cov}(p)=\infty$.
\end{rem}

The following proposition complements Proposition \ref{prop-top-comp-fin}.

\begin{prop} \label{prop-comp-sph}
The topological complexity of the sphere is finite.
\end{prop}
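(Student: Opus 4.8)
The plan is to exhibit a single branched covering $p \colon M \to \mathbb{P}^1$ with $M$ homeomorphic to the sphere and with $n \geq 3$ branch points, so that by Corollary \ref{cor-hyp-BC-comp-fin} its complexity $C_{\text{Cov}}(p)$ is finite; the chain of infimum inequalities $C_{\text{top}}(\mathbb{S}^2) \leq C_{\text{Riem}}(M) \leq C_{\text{Cov}}(p) < \infty$ then gives the result. The obstruction to reusing the proof of Proposition \ref{prop-top-comp-fin} is exactly that hyperelliptic coverings of the genus-$0$ surface have $n = 2g+2 = 2$ branch points, putting them at the non-hyperbolic boundary case where the complexity is infinite. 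So the whole point is to find a covering of the sphere that is \emph{not} the minimal double cover, i.e.\ one whose branching set has at least three points.

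The cleanest way to produce such a covering is to invoke Lemma \ref{lem-ABT-realiz-sph-1}, which asserts that the abstract branch triplet $\mathcal{T} = (3,3,\{[3],[1,2],[1,2]\})$ is realizable on a Riemann surface $M$ homeomorphic to the sphere. First I would record that this triplet has $n = 3 \geq 3$, so the associated $(3,3)$-branched covering $p \colon M \to \mathbb{P}^1$ is hyperbolic and hence, by Corollary \ref{cor-hyp-BC-comp-fin} (or directly by Lemma \ref{lem-area-complex}(2)), has finite complexity $C_{\text{Cov}}(p) = 2\pi d(n-2) = 2\pi \cdot 3 \cdot 1 = 6\pi$. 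Since $M \in \mathbb{X}_{\mathbb{S}^2}$, Definition \ref{defn-comp-RS} gives $C_{\text{Riem}}(M) \leq C_{\text{Cov}}(p) = 6\pi$, and Definition \ref{defn-top-comp} gives $C_{\text{top}}(\mathbb{S}^2) \leq C_{\text{Riem}}(M) \leq 6\pi < \infty$.

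The one subtlety worth checking — though it is not really an obstacle — is that Lemma \ref{lem-ABT-realiz-sph-1} guarantees realizability on \emph{some} Riemann surface homeomorphic to the sphere, which is precisely what is needed: the definition of $C_{\text{top}}$ takes an infimum over all of $\mathbb{X}_{\mathbb{S}^2}$, so a single witness suffices to bound it from above. I expect no genuine difficulty here; the real content is entirely front-loaded into Lemma \ref{lem-ABT-realiz-sph-1}, whose proof (via Theorem \ref{thm-EKS} or Theorem \ref{thm-Bar}) does the combinatorial work of confirming that a triply-branched degree-$3$ cover of the sphere by a sphere actually exists. Given that lemma, the proposition is a short deduction, exactly parallel in structure to Proposition \ref{prop-top-comp-fin} but with the hyperelliptic cover replaced by the degree-$3$ cover to escape the $n=2$ problem.
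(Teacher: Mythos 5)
Your proposal is correct and follows essentially the same route as the paper: both use the triplet $\mathcal{T}=(3,3,\{[3],[1,2],[1,2]\})$ realized via Lemma \ref{lem-ABT-realiz-sph-1}, apply Corollary \ref{cor-hyp-BC-comp-fin} to get finiteness, and conclude with the chain $C_{\text{top}}(\mathbb{S}^2)\leq C_{\text{Riem}}(M)\leq C_{\text{Cov}}(p)<\infty$. The explicit value $6\pi$ you compute is not needed here but is exactly what the paper later uses in Theorem \ref{computing-complex-genus-0}.
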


\begin{proof} Let $\mathbb{S}^2$ denote the $2$-sphere. Consider the abstract branch triplet $\mathcal{T}=(3,3,\{[3],[1,2],[1,2]\})$. By Lemma \ref{lem-ABT-realiz-sph-1}, $\mathcal{T}$ is realizable on a Riemann surface $M \in \mathbb{X}_{\mathbb{S}^2}$. Let $p \colon M \to \mathbb{P}^1$ be a branched covering whose branch data is equal to $\mathcal{T}$.

Since $n=3$, Corollary \ref{cor-hyp-BC-comp-fin} implies that $C_\text{Cov}(p)<\infty$. Hence, \[C_{\text{top}}(\mathbb{S}^2) \leq C_{\text{Riem}}(M) \leq C_\text{Cov}(p)<\infty.\]
\end{proof}
 
Combining Proposition \ref{prop-top-comp-fin} and Proposition \ref{prop-comp-sph}, we obtain the following.

\begin{thm} \label{thm-comp-all-surf-fin}
The topological complexity of every connected, closed, orientable surface is finite.
\end{thm}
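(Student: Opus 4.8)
The plan is to reduce the statement to the two preceding propositions by invoking the classification of connected, closed, orientable surfaces. Such a surface is determined up to homeomorphism by a single non-negative integer, its genus $g$. Since the topological complexity $C_{\text{top}}(S)$ depends only on the homeomorphism type of $S$ (it is defined as an infimum over all Riemann surfaces in $\mathbb{X}_S$), it suffices to verify finiteness for each genus $g \geq 0$, splitting the argument into the cases $g \geq 1$ and $g = 0$.

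For the case $g \geq 1$, I would simply cite Proposition \ref{prop-top-comp-fin}, which already establishes that the topological complexity of a genus-$g$ surface with $g \geq 1$ is finite. The mechanism there is to exhibit a hyperelliptic representative $M \in \mathbb{X}_S$, whose associated double branched covering $p \colon M \to \mathbb{P}^1$ has $n = 2g + 2 \geq 4$ branch points by Lemma \ref{lem-RH-hyp-surf}; since $n \geq 3$, Corollary \ref{cor-hyp-BC-comp-fin} guarantees $C_{\text{Cov}}(p) < \infty$, and hence $C_{\text{top}}(S) \leq C_{\text{Cov}}(p) < \infty$. For the remaining case $g = 0$, the surface is the $2$-sphere $\mathbb{S}^2$, and I would invoke Proposition \ref{prop-comp-sph}. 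Combining the two cases yields the theorem.

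The only genuinely delicate point is recognizing why the two cases cannot be merged into a single uniform argument. The hyperelliptic construction used for $g \geq 1$ breaks down precisely at $g = 0$: a double branched covering of a genus-$0$ surface has $n = 2g + 2 = 2$ branch points, so by Corollary \ref{cor-hyp-BC-comp-fin} its complexity is infinite. This is exactly why the sphere must be handled separately, using a covering with at least three branch points (the triplet $(3,3,\{[3],[1,2],[1,2]\})$ from Lemma \ref{lem-ABT-realiz-sph-1}) rather than a double cover. Once this case distinction is made explicit, no further difficulty remains; the theorem is a direct assembly of Proposition \ref{prop-top-comp-fin} and Proposition \ref{prop-comp-sph}.
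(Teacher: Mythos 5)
Your proof is correct and follows exactly the paper's route: the theorem is obtained by combining Proposition \ref{prop-top-comp-fin} (genus $g \geq 1$) with Proposition \ref{prop-comp-sph} (the sphere), and your explanation of why the hyperelliptic argument fails at $g=0$ matches the remark in the paper.
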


\section{The main theorems} \label{sect-main-thms}
In this section we compute the topological complexity of all surfaces. We start with the case of surfaces with positive genus.


\begin{thm} \label{computing-complex}
Let $S$ be a connected, closed, orientable surface of genus $g \geq 1$. Then the topological complexity $C_{\text{top}}(S)$ of $S$ is equal to $2\pi(2g+1)$.
\end{thm}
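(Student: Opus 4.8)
The plan is to establish the formula $C_{\text{top}}(S) = 2\pi(2g+1)$ by proving a matching upper bound and lower bound. The upper bound should come from exhibiting an explicit branched covering of low complexity, and the lower bound should follow from combining the complexity formula of Theorem \ref{main-Thm-CC-RR} with the bound $m_{\text{min}} \geq 3$ from Proposition \ref{prop-mmin-bounded}, adapted so as to rule out every possibility of total length less than the one we achieve.

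For the upper bound, I would invoke Theorem \ref{thm-BT-realizable}: with $d = 2g+1$, the abstract branch triplet $\mathcal{T} = (d,3,\{[d],[d],[d]\})$ is realizable on a Riemann surface $M$ homeomorphic to $S$. This gives a $(d,3)$-branched covering $p \colon M \to \mathbb{P}^1$, and since $n = 3 \geq 3$, Lemma \ref{lem-area-complex}(2) yields
\begin{equation*}
C_{\text{Cov}}(p) = 2\pi d(n-2) = 2\pi(2g+1)(3-2) = 2\pi(2g+1).
\end{equation*}
Hence $C_{\text{top}}(S) \leq C_{\text{Riem}}(M) \leq C_{\text{Cov}}(p) = 2\pi(2g+1)$. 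This is the straightforward direction.

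For the lower bound, the key observation is that by Definition \ref{defn-top-comp} and the fact that the infimum is attained, there is a Riemann surface $M \in \mathbb{X}_S$ with $C_{\text{top}}(S) = C_{\text{Riem}}(M)$; by Proposition \ref{prop-top-comp-fin} this value is finite, so $C_{\text{Riem}}(M)$ is finite and Theorem \ref{main-Thm-CC-RR} applies, giving $C_{\text{Riem}}(M) = 2\pi(m_{\text{min}} + 2g - 2)$ where $m_{\text{min}}$ is the minimum total length of an abstract branch triplet realizable on $M$. The task then reduces to showing $m_{\text{min}} \geq 3$, which is exactly Proposition \ref{prop-mmin-bounded}. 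Substituting yields $C_{\text{top}}(S) = 2\pi(m_{\text{min}} + 2g - 2) \geq 2\pi(3 + 2g - 2) = 2\pi(2g+1)$, matching the upper bound.

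\textbf{The main obstacle} I anticipate is a subtlety in how the infimum over all Riemann surfaces in $\mathbb{X}_S$ interacts with the per-surface formula of Theorem \ref{main-Thm-CC-RR}. The lower-bound argument above bounds $C_{\text{Riem}}(M)$ for the minimizing $M$, but one must be careful that $m_{\text{min}} \geq 3$ holds for \emph{every} $M \in \mathbb{X}_S$ (equivalently, for every realizable triplet compatible with a genus-$g$ surface), not just the minimizer — which is precisely the content of Proposition \ref{prop-mmin-bounded}, since its proof only uses $\chi(M) = 2 - 2g \leq 0$ and the compatibility equation. I would therefore phrase the lower bound as: for any $M \in \mathbb{X}_S$ with finite complexity, $C_{\text{Riem}}(M) = 2\pi(m_{\text{min}}(M) + 2g - 2) \geq 2\pi(2g+1)$, and any $M$ with infinite complexity trivially exceeds this; taking the infimum over $\mathbb{X}_S$ then preserves the inequality $C_{\text{top}}(S) \geq 2\pi(2g+1)$. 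Combining the two bounds closes the proof.
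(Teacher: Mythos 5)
Your proposal is correct and uses the same key ingredients as the paper's proof: the realizable triplet $(2g+1,3,\{[2g+1],[2g+1],[2g+1]\})$ from Theorem \ref{thm-BT-realizable} for the upper bound, and Theorem \ref{main-Thm-CC-RR} together with Proposition \ref{prop-mmin-bounded} for the lower bound. The only (cosmetic) difference is that you compute the upper bound directly as $C_{\text{Cov}}(p)=2\pi d(n-2)$ via Lemma \ref{lem-area-complex}, whereas the paper routes it through a comparison of the minimal total lengths $m_{\text{min}}$ and $m'_{\text{min}}$; both are valid and yield the same conclusion.
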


\begin{proof} By Theorem \ref{thm-comp-all-surf-fin}, $C_{\text{top}}(S)<\infty$. Let $M$ be a Riemann surface in $\mathbb{X}_S$ such that $C_{\text{Riem}}(M)=C_{\text{top}}(S)$. Let $m_{\text{min}}$ is the minimum total length of an abstract branch triplet that is realizable on $M$. By Theorem \ref{main-Thm-CC-RR}, 

\begin{equation} \label{comp-complex-eq-1}
 C_{\text{Riem}}(M)=2\pi(m_{\text{min}}+2g-2)                                                                                                                                                                                                                                                                                                                                                                                                                                                                                                                                                                                                  \end{equation}

Let $d=2g+1$. Consider the abstract branch triplet $\mathcal{T}=(d,3,\{[d],[d],[d]\})$ of total length  $m_{\mathcal{T}}=1+1+1=3$. By Theorem \ref{thm-BT-realizable}, the abstract branch triplet $\mathcal{T}$ is realizable on a Riemann surface $M' \in \mathbb{X}_S$. Let $p' \colon M' \to \mathbb{P}^1$ be the $(d,n)$-branched covering associated to $\mathcal{T}$. Since $n=3$, Corollary \ref{cor-hyp-BC-comp-fin} implies that $C_\text{Cov}(p')<\infty$, and so $C_{\text{Riem}}(M') \leq C_\text{Cov}(p')<\infty$.

Let $m'_{\text{min}}$ be the minimum total length of an abstract branch triplet that is realizable on $M'$. By Theorem \ref{main-Thm-CC-RR},

\begin{equation} \label{comp-complex-eq-2}
 C_{\text{Riem}}(M')=2\pi(m'_{\text{min}}+2g-2).
\end{equation}

Since $C_{\text{Riem}}(M)=C_{\text{top}}(S) \leq C_{\text{Riem}}(M')$, then Equations \eqref{comp-complex-eq-1} and \eqref{comp-complex-eq-2} imply that $m_{\text{min}} \leq m'_{\text{min}}$. Further, $m'_{\text{min}} \leq m_{\mathcal{T}}=3$, as $\mathcal{T}$ is realizable on $M'$. Therefore, $m_{\text{min}} \leq 3$. On the other hand, by Proposition \ref{prop-mmin-bounded}, $m_{\text{min}} \geq 3$. Hence, $m_{\text{min}}=3$, and so we obtain: \[C_{\text{top}}(S)=C_{\text{Riem}}(M)=2\pi(3+2g-2)=2\pi(2g+1).\]
\end{proof}

To compute the topological complexity of the sphere we need the following lemma.

\begin{lem} \label{lem-top-comp-sph-1}
Let $p \colon M \to \mathbb{P}^1$ be a hyperbolic $(d,n)$-branched covering with $M$ homeomorphic to the sphere. Then:
\begin{enumerate}
 \item $d \geq 3$, and 
 \item $C_\text{Cov}(p) \geq 6 \pi$.
\end{enumerate}
\end{lem}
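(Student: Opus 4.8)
The covering is hyperbolic, so by Corollary \ref{cor-hyp-BC-comp-fin} we have $n \geq 3$, and by Lemma \ref{lem-area-complex}(2) the complexity is $C_{\text{Cov}}(p) = 2\pi d(n-2)$. Since $n \geq 3$ gives $n - 2 \geq 1$, it follows that $C_{\text{Cov}}(p) \geq 2\pi d$. Thus once I establish part (1), namely $d \geq 3$, part (2) is immediate: $C_{\text{Cov}}(p) \geq 2\pi \cdot 3 = 6\pi$. So the entire content of the lemma is concentrated in proving the lower bound $d \geq 3$ on the degree.

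**Proving $d \geq 3$.** My first move is to rule out $d = 1$ and $d = 2$ separately, using the Riemann-Hurwitz formula (Theorem \ref{RH-form-alt}) with $\chi(M) = \chi(\mathbb{S}^2) = 2$. If $d = 1$, the covering is a homeomorphism and has no branch points, contradicting $n \geq 3$ (or more directly, a degree-one map has empty ramification set, forcing $n = 0$). If $d = 2$, then $M$ homeomorphic to the sphere has genus $0$, so Lemma \ref{lem-RH-hyp-surf} applies to the $(2,n)$-branched covering and yields $n = 2g + 2 = 2$. But $n = 2$ contradicts the hypothesis $n \geq 3$ (equivalently, it contradicts hyperbolicity via Corollary \ref{cor-hyp-BC-comp-fin}). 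Having excluded $d = 1$ and $d = 2$, we conclude $d \geq 3$.

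**Where the work lies.** The main (and only real) obstacle is the $d = 2$ case, and the key realization is that the genus-$0$ constraint is exactly what forces $n$ to be too small. A plain degree-$2$ cover of the sphere, had $M$ higher genus, could certainly be hyperbolic; it is precisely the assumption that $M$ is a sphere that collapses the branch-point count to $n = 2$, killing hyperbolicity. So the argument hinges on feeding the topological type of $M$ into Lemma \ref{lem-RH-hyp-surf}. I would present the $d=2$ step as the crux and handle $d = 1$ as a trivial remark, then assemble part (2) in a single line from Lemma \ref{lem-area-complex}(2). No delicate computation is needed beyond tracking $\chi(M) = 2$ and the degree inequality.
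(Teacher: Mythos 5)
Your proposal is correct and follows essentially the same route as the paper: rule out $d=1$ via the empty branching set, rule out $d=2$ by feeding genus $0$ into Lemma \ref{lem-RH-hyp-surf} to get $n=2<3$, and then deduce part (2) from Lemma \ref{lem-area-complex} together with $n\geq 3$ and $d\geq 3$. The only cosmetic difference is that you argue by contradiction where the paper phrases the same step as a contrapositive.
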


\begin{proof} 
$(1)$ By Corollary \ref{cor-hyp-BC-comp-fin}, it suffices to prove that if $d \leq 2$ then $n<3$. If $d=1$, then the branching set of $p$ is empty, and so $n=0$. Suppose that $d=2$. Then  $p \colon M \to \mathbb{P}^1$ is a $(2,n)$-branched covering and the genus of $M$ is $g=0$. It follows from Lemma \ref{lem-RH-hyp-surf} that $n=2g+2=2$.

$(2)$ By Corollary \ref{cor-hyp-BC-comp-fin}, $n \geq 3$. Further, $C_\text{Cov}(p)=2\pi d (n-2)$ (by Lemma \ref{lem-area-complex}). Finally, $d \geq 3$, by $(1)$, and thus, \[C_\text{Cov}(p)=2\pi d (n-2) \geq 2\pi (3) (3-2)=6 \pi.\]
\end{proof}

The following result complements Theorem \ref{computing-complex}.

\begin{thm} \label{computing-complex-genus-0}
The topological complexity of the sphere is equal to $6 \pi$.
\end{thm}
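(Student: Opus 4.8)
The plan is to show $C_{\text{top}}(\mathbb{S}^2)=6\pi$ by establishing the two inequalities separately. For the upper bound, I would reuse the construction already given in the proof of Proposition \ref{prop-comp-sph}: the abstract branch triplet $\mathcal{T}=(3,3,\{[3],[1,2],[1,2]\})$ is realizable on some $M\in\mathbb{X}_{\mathbb{S}^2}$ by Lemma \ref{lem-ABT-realiz-sph-1}, and the associated branched covering $p\colon M\to\mathbb{P}^1$ has $n=3$, so by Lemma \ref{lem-area-complex}(2) its complexity is exactly $2\pi d(n-2)=2\pi\cdot 3\cdot 1=6\pi$. Hence $C_{\text{top}}(\mathbb{S}^2)\le C_{\text{Riem}}(M)\le C_{\text{Cov}}(p)=6\pi$.

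For the lower bound, I would argue that every branched covering $p\colon M\to\mathbb{P}^1$ with $M\in\mathbb{X}_{\mathbb{S}^2}$ has complexity at least $6\pi$. Here Lemma \ref{lem-top-comp-sph-1} does essentially all the work: any such covering is either non-hyperbolic, in which case $C_{\text{Cov}}(p)=\infty\ge 6\pi$, or hyperbolic, in which case Lemma \ref{lem-top-comp-sph-1}(2) gives $C_{\text{Cov}}(p)\ge 6\pi$ directly. Taking the infimum over all coverings of a fixed $M\in\mathbb{X}_{\mathbb{S}^2}$ yields $C_{\text{Riem}}(M)\ge 6\pi$, and then taking the infimum over all $M\in\mathbb{X}_{\mathbb{S}^2}$ gives $C_{\text{top}}(\mathbb{S}^2)\ge 6\pi$.

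Combining the two bounds gives $C_{\text{top}}(\mathbb{S}^2)=6\pi$. I expect this proof to be short and free of genuine obstacles, since the two key ingredients (the explicit degree-$3$ realization and the uniform lower bound for genus-zero domains) have already been isolated as Lemma \ref{lem-ABT-realiz-sph-1} and Lemma \ref{lem-top-comp-sph-1}. The only point requiring a little care is making sure the lower bound is applied at the right level of generality: Lemma \ref{lem-top-comp-sph-1} is stated for a single covering, so I would be explicit that the bound $C_{\text{Cov}}(p)\ge 6\pi$ holds for \emph{all} coverings of \emph{all} $M\in\mathbb{X}_{\mathbb{S}^2}$ (including the degenerate low-degree cases, which are non-hyperbolic and thus have infinite complexity), before passing to the two nested infima defining $C_{\text{top}}$.
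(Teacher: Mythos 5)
Your proposal is correct and follows essentially the same route as the paper's own proof: the upper bound via the realizable triplet $(3,3,\{[3],[1,2],[1,2]\})$ from Lemma \ref{lem-ABT-realiz-sph-1}, and the lower bound via Lemma \ref{lem-top-comp-sph-1} together with the observation that non-hyperbolic coverings have infinite complexity, followed by the two nested infima. No differences worth noting.
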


\begin{proof} Let $\mathbb{S}^2$ denote the $2$-sphere. Consider the abstract branch triplet $\mathcal{T}=(3,3,\{[3],[1,2],[1,2]\})$. By Lemma \ref{lem-ABT-realiz-sph-1}, $\mathcal{T}$ is realizable on a Riemann surface $M_0 \in \mathbb{X}_{\mathbb{S}^2}$. Let $p_0 \colon M_0 \to \mathbb{P}^1$ be a $(d,n)$-branched covering whose branch data is equal to $\mathcal{T}$.

Since $d=3$ and $n=3$, then Lemma \ref{lem-area-complex} implies that \[ C_{\text{Cov}}(p_0)=2\pi d (n-2)=2\pi(3) (3-2)=6 \pi.\] Hence,

\begin{equation} \label{eq-computing-comp-sph-1}
C_{\text{top}}(\mathbb{S}^2) \leq C_{\text{Riem}}(M_0) \leq C_{\text{Cov}}(p_0)=6 \pi.
\end{equation}

Let $M \in \mathbb{X}_{\mathbb{S}^2}$. We prove that $C_{\text{Riem}}(M) \geq 6 \pi$.  Suppose that $p \colon M \to \mathbb{P}^1$ is a $(d,n)$-branched covering. If $p$ is hyperbolic, then Lemma \ref{lem-top-comp-sph-1} implies that $C_\text{Cov}(p) \geq 6 \pi$. If $p$ is not hyperbolic, $C_{\text{Cov}}(p)=\infty$. In either case,  $C_{\text{Cov}}(p) \geq 6 \pi$. Thus, by definition, $C_{\text{Riem}}(M) \geq 6 \pi$.

Since $C_{\text{Riem}}(M) \geq 6 \pi$ for all $M \in \mathbb{X}_{\mathbb{S}^2}$, then, by definition, 

\begin{equation} \label{eq-computing-comp-sph-2}
C_{\text{top}}(\mathbb{S}^2) \geq 6 \pi.
\end{equation}

By equations \eqref{eq-computing-comp-sph-1} and \eqref{eq-computing-comp-sph-2}, we obtain that $C_{\text{top}}(\mathbb{S}^2)=6\pi$.
\end{proof}

We can merge Theorems \ref{computing-complex} and \ref{computing-complex-genus-0} into the following result. 

\begin{thm} \label{main-thm-top-comp}
Let $S$ be a connected, closed, orientable surface of genus $g$. Let $C_{\text{top}}(S)$ denote the topological complexity of $S$. Then 

\[C_{\text{top}}(S)=
\left\{
	\begin{array}{cl}
		2\pi(2g+1)   & \mbox{if } g \geq 1,\\
		 6 \pi  & \mbox{if } g=0.	
	\end{array}
\right.\]
\end{thm}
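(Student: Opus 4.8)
The statement is the amalgamation of the two computations carried out separately for positive genus and for the sphere, so the plan is to dispose of it by a single case split on whether $g \geq 1$ or $g = 0$, invoking Theorem \ref{computing-complex} in the former case and Theorem \ref{computing-complex-genus-0} in the latter. Since both of those theorems are already in hand, the merged formula follows at once; the substance of the argument lives entirely in those two results, and I sketch below the common strategy they share so that it is clear where the real work sits. In every case the method is a \emph{sandwich}: one produces an explicit branched covering to bound $C_{\text{top}}(S)$ from above, and one argues structurally that no covering can do better, to bound it from below.

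For $g \geq 1$, I would first note that $C_{\text{top}}(S)$ is finite by Theorem \ref{thm-comp-all-surf-fin}, so the infimum is realized by some $M \in \mathbb{X}_S$ with $C_{\text{Riem}}(M) = C_{\text{top}}(S)$, and Theorem \ref{main-Thm-CC-RR} reduces the whole problem to computing the minimal total length $m_{\text{min}}$ of a realizable abstract branch triplet on $M$, via $C_{\text{Riem}}(M) = 2\pi(m_{\text{min}} + 2g - 2)$. The upper bound comes from the triplet $\mathcal{T} = (2g+1, 3, \{[d],[d],[d]\})$ with $d = 2g+1$, which Theorem \ref{thm-BT-realizable} realizes on a surface of genus $g$ and which has total length $3$; comparing complexities forces $m_{\text{min}} \leq 3$. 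The matching lower bound $m_{\text{min}} \geq 3$ is exactly Proposition \ref{prop-mmin-bounded}. Substituting $m_{\text{min}} = 3$ yields $2\pi(2g+1)$.

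For $g = 0$ the same sandwich is used but with a different lower-bound mechanism, since Theorem \ref{main-Thm-CC-RR} is stated only for $g \geq 1$. The upper bound is the triplet $(3,3,\{[3],[1,2],[1,2]\})$ of Lemma \ref{lem-ABT-realiz-sph-1}, whose complexity is $2\pi \cdot 3 \cdot (3-2) = 6\pi$ by Lemma \ref{lem-area-complex}. For the lower bound one must show that every branched covering $p \colon M \to \mathbb{P}^1$ with $M$ a sphere has $C_{\text{Cov}}(p) \geq 6\pi$: non-hyperbolic coverings have infinite complexity, and for a hyperbolic one Lemma \ref{lem-top-comp-sph-1} gives both $n \geq 3$ and, crucially, $d \geq 3$, whence $C_{\text{Cov}}(p) = 2\pi d(n-2) \geq 6\pi$. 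The main obstacle, and the one genuinely delicate point of the whole theorem, is precisely this degree estimate $d \geq 3$ for a hyperbolic covering of the sphere: one has to rule out $d = 1$ (where the branching set is empty) and especially $d = 2$, where the Riemann--Hurwitz count for double covers (Lemma \ref{lem-RH-hyp-surf}) forces $n = 2g + 2 = 2 < 3$, so the covering fails to be hyperbolic. Everything else is assembly of already-established facts.
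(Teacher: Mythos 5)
Your proposal matches the paper exactly: the paper proves this theorem simply by merging Theorems \ref{computing-complex} and \ref{computing-complex-genus-0}, and your summaries of the two underlying arguments (the sandwich via Theorem \ref{main-Thm-CC-RR}, Theorem \ref{thm-BT-realizable} and Proposition \ref{prop-mmin-bounded} for $g \geq 1$; the explicit triplet of Lemma \ref{lem-ABT-realiz-sph-1} against the degree bound of Lemma \ref{lem-top-comp-sph-1} for $g=0$) faithfully reproduce what the paper does. No gaps.
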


\section{The Simple Complexity of a Topological Surface} \label{sect-simp-comp}

In this section we define a special type of $(d,n)$-branched coverings that we call simple. As usual, $|B|$ denotes the cardinality of the set $B$.

\begin{defn} (\cite{CC-RR-BAMS}) \label{defn-simp-BC}
A $(d,n)$-branched covering $p \colon M \to \mathbb{P}^1$ is called \emph{simple} if $|p^{-1}(y)| \geq d-1$ for all $y \in \mathbb{P}^1$.
\end{defn}

For simple branched coverings, the Riemann-Hurwitz formula takes the following form.

\begin{thm} (\cite[Theorem $3.4$]{CC-RR-BAMS}) \label{thm-RH-simp-BC} Let $p \colon M \to S^2$ be a simple $(d,n)$-branched covering. Let $g$ be the genus of $M$. Then  \[2-2g=2d-n.\]
\end{thm}

We use Theorem \ref{thm-RH-simp-BC} to obtain a lower bound for the complexity of a simple branched covering.

\begin{lem} \label{lem-simp-top-comp-gen-pos}
Let $p \colon M \to \mathbb{P}^1$ be a simple $(d,n)$-branched covering. Suppose that the genus $g$ of $M$ is positive. Then $C_\text{Cov}(p) \geq 8 \pi g$.
\end{lem}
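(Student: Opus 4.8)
The plan is to split into two cases according to whether $p$ is hyperbolic. If $p$ is not hyperbolic, then $C_\text{Cov}(p)=\infty$ by definition, so the inequality $C_\text{Cov}(p) \geq 8\pi g$ holds trivially. Thus the substance of the argument lies in the hyperbolic case, where Corollary \ref{cor-hyp-BC-comp-fin} gives $n \geq 3$ and Lemma \ref{lem-area-complex}(2) supplies the closed-form expression $C_\text{Cov}(p)=2\pi d(n-2)$.

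In the hyperbolic case, I would first invoke the simple Riemann-Hurwitz formula (Theorem \ref{thm-RH-simp-BC}), which reads $2-2g=2d-n$, that is, $n=2d+2g-2$. Substituting this into the complexity formula eliminates $n$ and yields $C_\text{Cov}(p)=2\pi d(2d+2g-4)=4\pi d(d+g-2)$. The goal then reduces to the purely arithmetic inequality $d(d+g-2) \geq 2g$.

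To control the degree, I would observe that since $g \geq 1$ the surface $M$ is not a sphere, so $p$ cannot have degree one (a degree-one branched covering is a biholomorphism onto $\mathbb{P}^1$, which has genus $0$). Hence $d \geq 2$. The key algebraic step is then the factorization $d(d+g-2)-2g=(d-2)(d+g)$, which is manifestly nonnegative because $d-2 \geq 0$ and $d+g>0$. This gives $d(d+g-2) \geq 2g$ and therefore $C_\text{Cov}(p)=4\pi d(d+g-2) \geq 8\pi g$, as desired.

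The main obstacle here is modest: it consists in spotting the factorization $d(d+g-2)-2g=(d-2)(d+g)$, which renders the bound transparent, and in justifying $d \geq 2$. I would also note that equality holds precisely when $d=2$, that is, for the double (hyperelliptic) coverings; this foreshadows that the simple topological complexity $8\pi g$ will be attained by hyperelliptic Riemann surfaces.
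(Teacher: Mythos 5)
Your proposal is correct and follows essentially the same route as the paper: dispose of the non-hyperbolic case where the complexity is infinite, then in the hyperbolic case combine $C_\text{Cov}(p)=2\pi d(n-2)$ with the simple Riemann--Hurwitz relation $n=2d+2g-2$ to get $C_\text{Cov}(p)=4\pi d(d+g-2)$ and bound this below using $d\geq 2$. The only cosmetic differences are your justification of $d\geq 2$ (via degree-one coverings being biholomorphisms, where the paper notes a degree-one covering has empty branching set, contradicting $n\geq 3$) and your clean factorization $d(d+g-2)-2g=(d-2)(d+g)$, where the paper simply plugs in $d=2$.
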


\begin{proof} If $n<3$, then  Corollary \ref{cor-hyp-BC-comp-fin} implies that $C_\text{Cov}(p)=\infty$ and so the conclusion of the lemma is clearly satisfied. Therefore, we can assume that $n \geq 3$ for the rest of the proof.

If $d=1$, then the branching set of $p$ is empty, which contradicts our assumption that $n \geq 3$. Therefore, $d \geq 2$.

Since $p$ is simple, Theorem \ref{thm-RH-simp-BC} implies that  $2-2g=2d-n$. This means that $n=2d+2g-2 \geq 4$, as $d \geq 2$ and $g \geq 1$. Therefore, by Lemma \ref{lem-area-complex},

\begin{equation} \label{eqn-lem-simp-top-comp-gen-pos-1}
 C_\text{Cov}(p)=2\pi d (n-2)=2\pi d (2d+2g-4)=4\pi d (d+g-2).
\end{equation}

Since $d \geq 2$ and $g \geq 1$, then equation \eqref{eqn-lem-simp-top-comp-gen-pos-1} implies that \[ C_\text{Cov}(p)=4\pi d (d+g-2) \geq 4\pi (2) (2+g-2)=8\pi g.\] 
\end{proof}

We now define a notion of simple complexity for topological surfaces.

\begin{defn} \label{defn-simp-comp-top}  Let $S$ be a connected, closed, orientable surface. We define the \emph{simple topological complexity}, $C_{\text{simp}}(S)$, of $S$ as the infimum of the set of all complexities of simple branched coverings of $M$ to $\mathbb{P}^1$ with $M \in \mathbb{X}_S$: \[ C_{\text{simp}}(S)=\inf\{C_{\text{Cov}}(p)\,|\, p \colon M \to \mathbb{P}^1 \text{ is a simple branched covering and } M \in \mathbb{X}_S \}\]
\end{defn}

Clearly, a double branched covering is simple. Thus, Proposition \ref{prop-XS-hyper} shows that the set $\{C_{\text{Cov}}(p)\,|\, p \colon M \to \mathbb{P}^1 \text{ is a simple branched covering and } M \in \mathbb{X}_S \}$ is not empty for every (connected, closed, orientable) surface $S$. Hence, Observation \ref{obs-non-empty-subset-C} implies that the infimum in Definition \ref{defn-simp-comp-top} is attained, although it could be infinite.

We now compute the simple topological complexity of all connected, closed, orientable surfaces. We start with the case of positive genus.

\begin{thm} \label{thm-comp-simp-complex-gen-greater-1}
Let $S$ be a connected, closed, orientable surface of genus $g \geq 1$. Then the simple topological complexity $C_{\text{simp}}(S)$ of $S$ is equal to $8\pi g$.
\end{thm}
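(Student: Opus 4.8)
The plan is to establish both a lower bound and a matching upper bound for $C_{\text{simp}}(S)$ when $g \geq 1$, thereby proving equality with $8\pi g$. The lower bound is already essentially done: Lemma \ref{lem-simp-top-comp-gen-pos} tells us that \emph{every} simple branched covering $p \colon M \to \mathbb{P}^1$ with $M \in \mathbb{X}_S$ satisfies $C_\text{Cov}(p) \geq 8\pi g$. Taking the infimum over all such coverings immediately yields $C_{\text{simp}}(S) \geq 8\pi g$. So the entire burden of the proof is to exhibit a single simple branched covering whose complexity equals $8\pi g$ exactly, which will force the infimum down to $8\pi g$.

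First I would inspect the equality case in the proof of Lemma \ref{lem-simp-top-comp-gen-pos}. The chain of inequalities there, $C_\text{Cov}(p) = 4\pi d(d+g-2) \geq 4\pi(2)(2+g-2) = 8\pi g$, becomes an equality precisely when $d = 2$. By the simple Riemann-Hurwitz relation $n = 2d + 2g - 2$, setting $d = 2$ gives $n = 2g + 2$. Thus the covering I need to produce is a simple $(2, 2g+2)$-branched covering onto $\mathbb{P}^1$ with domain of genus $g$ — in other words, a double covering. This is exactly the hyperelliptic situation: a double branched covering is automatically simple (as noted just after Definition \ref{defn-simp-comp-top}), and by Proposition \ref{prop-XS-hyper} there exists a hyperelliptic Riemann surface $M \in \mathbb{X}_S$, i.e.\ one admitting a double branched covering $p \colon M \to \mathbb{P}^1$. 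Lemma \ref{lem-RH-hyp-surf} then guarantees that its number of branch points is $n = 2g+2$, and since $g \geq 1$ we have $n = 2g+2 \geq 4 \geq 3$, so the covering is hyperbolic and its complexity is finite.

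The final step is the direct computation of this complexity. Since $p$ is a $(2, 2g+2)$-branched covering with $n = 2g+2 \geq 3$, Lemma \ref{lem-area-complex} gives
\[
C_\text{Cov}(p) = 2\pi d(n-2) = 2\pi \cdot 2 \cdot (2g+2-2) = 8\pi g.
\]
Combining this with the definition of $C_{\text{simp}}(S)$ as an infimum, we get $C_{\text{simp}}(S) \leq C_\text{Cov}(p) = 8\pi g$. Together with the lower bound from the opening paragraph, this yields $C_{\text{simp}}(S) = 8\pi g$.

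I do not anticipate a serious obstacle here, since all the machinery is in place: the lower bound is handed to us by Lemma \ref{lem-simp-top-comp-gen-pos}, and the matching upper bound comes from the existence of a hyperelliptic surface of the correct genus (Proposition \ref{prop-XS-hyper}) together with the area computation (Lemma \ref{lem-area-complex}). The only point requiring any care is recognizing that the equality case $d = 2$ in the lower-bound argument is realized precisely by the hyperelliptic double covering — this is what ties the two bounds together. If there is a subtlety, it lies in confirming that a double branched covering indeed qualifies as simple under Definition \ref{defn-simp-BC} (each fiber has at least $d - 1 = 1$ point, which holds trivially), so that it legitimately contributes to the infimum defining $C_{\text{simp}}(S)$.
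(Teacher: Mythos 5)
Your proposal is correct and follows essentially the same route as the paper: the lower bound comes from Lemma \ref{lem-simp-top-comp-gen-pos}, and the matching upper bound is realized by the hyperelliptic double covering supplied by Proposition \ref{prop-XS-hyper}, whose complexity is computed via Lemma \ref{lem-RH-hyp-surf} and Lemma \ref{lem-area-complex} to be exactly $8\pi g$. The additional discussion of the equality case $d=2$ is a nice motivation but is not needed for the argument itself.
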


\begin{proof} By Proposition \ref{prop-XS-hyper}, there exists a hyperelliptic Riemann surface $M_0 \in \mathbb{X}_S$. Let $p_0 \colon M_0 \to \mathbb{P}^1$ be the associated $(2,n)$-branched covering. By Lemma \ref{lem-RH-hyp-surf}, $n=2g+2$. Therefore, $g \geq 1$ implies that $n \geq 4$, and so \[C_\text{Cov}(p_0)=2\pi d (n-2)=2\pi (2) (2g)=8 \pi g.\]

Since $p_0$ is simple, then $C_{\text{simp}}(S) \leq C_\text{Cov}(p_0)=8 \pi g$. By Lemma \ref{lem-simp-top-comp-gen-pos}, $ C_{\text{simp}}(S) \geq 8 \pi g$. Combining the last two inequalities, we obtain that $ C_{\text{simp}}(S)=8 \pi g$.
\end{proof}

\begin{rem}
 The above proof does not work for $g=0$, because, in that case, $C_\text{Cov}(p_0)=\infty$.
\end{rem}

We now extend Theorem \ref{thm-comp-simp-complex-gen-greater-1} to the case $g=0$. To this end, we need the analog of Lemma \ref{lem-simp-top-comp-gen-pos} for genus zero.

\begin{lem} \label{lem-simp-top-comp-gen-0}
Let $p \colon M \to \mathbb{P}^1$ be a simple $(d,n)$-branched covering with $M \in \mathbb{X}_{\mathbb{S}^2}$. Then $C_\text{Cov}(p) \geq 12 \pi$.
\end{lem}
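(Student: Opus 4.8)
The plan is to mirror the argument of Lemma \ref{lem-simp-top-comp-gen-pos}, adapting the simple Riemann--Hurwitz relation (Theorem \ref{thm-RH-simp-BC}) to the genus-zero case. First I would dispose of the non-hyperbolic situation: if $n < 3$, then by Corollary \ref{cor-hyp-BC-comp-fin} we have $C_\text{Cov}(p) = \infty$, which trivially satisfies $C_\text{Cov}(p) \geq 12\pi$. So for the rest of the argument I would assume $n \geq 3$.

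Next I would pin down the admissible degrees. Since $n \geq 3 > 0$ the branching set is non-empty, so $d \geq 2$. Because $p$ is simple and the genus of $M$ is $g = 0$, Theorem \ref{thm-RH-simp-BC} gives $2 - 2g = 2d - n$, that is, $n = 2d - 2$. The hypothesis $n \geq 3$ then forces $2d - 2 \geq 3$; since $d$ is an integer this upgrades to $d \geq 3$ (note that $d = 2$ would give $n = 2 < 3$, which is excluded). This integrality step is exactly where the genus of the sphere enters and is what separates the threshold $12\pi$ from the positive-genus bound of Lemma \ref{lem-simp-top-comp-gen-pos}.

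Finally I would substitute into the area formula of Lemma \ref{lem-area-complex}. Using $n = 2d - 2$, I get $C_\text{Cov}(p) = 2\pi d(n - 2) = 2\pi d(2d - 4) = 4\pi d(d - 2)$. The right-hand side is increasing in $d$ for $d \geq 3$ and equals $4\pi \cdot 3 \cdot 1 = 12\pi$ at $d = 3$, so $C_\text{Cov}(p) \geq 12\pi$, as desired. I expect essentially no obstacle in this proof; the whole content is the genus-zero specialization $n = 2d - 2$ together with the integrality observation forcing $d \geq 3$, after which the area computation is routine.
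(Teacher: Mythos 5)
Your proof is correct and follows essentially the same route as the paper's: both reduce to the hyperbolic case $n \geq 3$, invoke Theorem \ref{thm-RH-simp-BC} to get $n = 2d-2$, and use an integrality observation (you phrase it as $d \geq 3$ by integrality of $d$; the paper phrases it as $n \geq 4$ by parity of $n$ — the same fact) before substituting into the area formula. No gaps.
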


\begin{proof} We can assume that $n \geq 3$. Let $g$ be the genus of $M$. Since $p$ is simple and $g=0$, then Theorem \ref{thm-RH-simp-BC} implies that $n=2d-2$. In particular, $n$ is even. Combining this with the condition $n \geq 3$, we obtain that $n \geq 4$. This means that $d \geq 3$, and so \[C_\text{Cov}(p)=2\pi d (n-2) \geq 2\pi (3) (4-2)=12 \pi.\]
\end{proof}

We now state the analog of Theorem \ref{thm-comp-simp-complex-gen-greater-1} for genus zero.

\begin{thm} \label{thm-simp-top-comp-gen-0}
The simple topological complexity of the sphere is equal to $12\pi$.
\end{thm}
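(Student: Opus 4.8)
The plan is to establish the equality by two matching inequalities, mirroring the structure of the proof of Theorem \ref{computing-complex-genus-0}. For the lower bound, I would simply invoke Lemma \ref{lem-simp-top-comp-gen-0}: every simple $(d,n)$-branched covering $p \colon M \to \mathbb{P}^1$ with $M \in \mathbb{X}_{\mathbb{S}^2}$ satisfies $C_{\text{Cov}}(p) \geq 12\pi$. Taking the infimum over all such coverings immediately gives $C_{\text{simp}}(\mathbb{S}^2) \geq 12\pi$.

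The substance of the argument is the upper bound, which requires exhibiting a \emph{simple} branched covering of a sphere whose complexity is exactly $12\pi$. First I would pin down the combinatorial target. By Lemma \ref{lem-area-complex}, a $(d,n)$-covering has complexity $2\pi d(n-2)$, so I need $d(n-2)=6$; and by Theorem \ref{thm-RH-simp-BC} a simple covering of a genus-zero surface satisfies $n=2d-2$. The smallest solution is $d=3$, $n=4$. Simplicity for degree $3$ forces every branch point to carry the partition $[1,2]$ (since $[3]$ has length one and $[1,1,1]$ is trivial), so the only candidate is the abstract branch triplet $\mathcal{T}=(3,4,\{[1,2],[1,2],[1,2],[1,2]\})$, of total length $8$. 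I would then verify that $\mathcal{T}$ is compatible with $\mathbb{S}^2$ via equation \eqref{defn-comp-BT}: the identity $2-8=3(2-4)$ holds.

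The key step --- and the only place a genuine realizability result is needed --- is showing that $\mathcal{T}$ is realizable on a Riemann surface homeomorphic to $\mathbb{S}^2$. Here Theorem \ref{thm-EKS} is unavailable, because \emph{every} partition in $\mathcal{T}$ has length two, so none has length one; this is the main subtlety of the proof and the reason genus zero must be handled separately. Instead I would apply Bara\'{n}ski's Theorem \ref{thm-Bar}: since $n=4 \geq 3=d$ and $\mathcal{T}$ is compatible with $\mathbb{S}^2$, the triplet is realizable on some $M \in \mathbb{X}_{\mathbb{S}^2}$. Let $p \colon M \to \mathbb{P}^1$ be the associated $(3,4)$-covering. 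Each of its branch points has two preimages and each regular point has three, so $|p^{-1}(y)| \geq d-1=2$ for all $y \in \mathbb{P}^1$; that is, $p$ is simple. Finally, Lemma \ref{lem-area-complex} gives $C_{\text{Cov}}(p)=2\pi(3)(4-2)=12\pi$, whence $C_{\text{simp}}(\mathbb{S}^2) \leq 12\pi$. Combining the two bounds yields $C_{\text{simp}}(\mathbb{S}^2)=12\pi$.
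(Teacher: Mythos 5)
Your proposal is correct and follows essentially the same route as the paper: the lower bound via Lemma \ref{lem-simp-top-comp-gen-0}, and the upper bound by realizing the triplet $\mathcal{T}=(3,4,\{[1,2],[1,2],[1,2],[1,2]\})$ through Bara\'{n}ski's Theorem \ref{thm-Bar}, checking simplicity, and computing $C_{\text{Cov}}=12\pi$. The additional remarks on why $(d,n)=(3,4)$ is forced and why Theorem \ref{thm-EKS} is unavailable are sound motivation but not needed for the argument.
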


\begin{proof} Consider the abstract branch triplet $\mathcal{T}=(3,4,\{[1,2],[1,2],[1,2],[1,2]\})$. By Theorem \ref{thm-Bar}, there exists a $(3,4)$-branched covering $p_0 \colon M_0 \to \mathbb{P}^1$, with $M_0 \in \mathbb{X}_{\mathbb{S}^2}$, whose branch triplet coincides with $\mathcal{T}$. This branched covering is simple because the length of the partition $[1,2]$ is equal to $2=d-1$. Further, \[C_\text{Cov}(p)=2\pi d (n-2)=2\pi (3) (4-2)=12 \pi.\] Hence, $C_{\text{simp}}(\mathbb{S}^2) \leq C_\text{Cov}(p_0)=12 \pi$. By Lemma \ref{lem-simp-top-comp-gen-0}, $C_{\text{simp}}(\mathbb{S}^2) \geq 12 \pi$. Combining the last two inequalities, we obtain that $ C_{\text{simp}}(\mathbb{S}^2)=12\pi$.
\end{proof}

We can merge Theorems \ref{thm-comp-simp-complex-gen-greater-1} and \ref{thm-simp-top-comp-gen-0} into the following result. 

\begin{thm} \label{main-thm-simp-comp}
Let $S$ be a connected, closed, orientable surface of genus $g$. Let $C_{\text{simp}}(S)$ denote the simple topological complexity of $S$. Then 

\[C_{\text{simp}}(S)=
\left\{
	\begin{array}{cl}
		8\pi g   & \mbox{if } g \geq 1,\\
		 12\pi  & \mbox{if } g=0.	
	\end{array}
\right.\]
\end{thm}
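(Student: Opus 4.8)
The plan is to recognize that this theorem is simply the consolidation of the two cases that have already been settled in Theorem \ref{thm-comp-simp-complex-gen-greater-1} and Theorem \ref{thm-simp-top-comp-gen-0}, so the proof should proceed by a case split on the genus $g$, with no genuinely new content required. The strategy is therefore to verify that the two already-established cases exhaust all possibilities and that each matches the corresponding branch of the piecewise formula.

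First I would dispose of the case $g \geq 1$ by directly invoking Theorem \ref{thm-comp-simp-complex-gen-greater-1}, which asserts that $C_{\text{simp}}(S) = 8\pi g$ for every connected, closed, orientable surface of genus $g \geq 1$. This is precisely the top branch of the formula, so nothing further is needed. Next I would handle the remaining case $g = 0$, in which $S$ is homeomorphic to the sphere $\mathbb{S}^2$; here Theorem \ref{thm-simp-top-comp-gen-0} gives $C_{\text{simp}}(\mathbb{S}^2) = 12\pi$, which is exactly the bottom branch. Since the two cases $g \geq 1$ and $g = 0$ cover all connected, closed, orientable surfaces, combining them produces the stated piecewise expression.

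As for where the difficulty lies: there is essentially no obstacle in this final step, because all the substantive work has already been carried out upstream. In particular, the lower bounds come from Lemma \ref{lem-simp-top-comp-gen-pos} (for $g \geq 1$) and Lemma \ref{lem-simp-top-comp-gen-0} (for $g = 0$), while the matching upper bounds come from realizing explicit simple branch triplets — the hyperelliptic $(2, 2g+2)$-covering supplied by Proposition \ref{prop-XS-hyper} in the positive-genus case, and the covering with branch triplet $(3,4,\{[1,2],[1,2],[1,2],[1,2]\})$ furnished by Theorem \ref{thm-Bar} in the genus-zero case. The only things to check in the merged statement are that the case division is exhaustive and that each case's value agrees with the correct branch, both of which are immediate. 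I would thus present this as a short two-case argument that cites the two component theorems and concludes.
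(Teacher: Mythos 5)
Your proposal is correct and matches the paper exactly: the paper presents Theorem \ref{main-thm-simp-comp} as the merge of Theorem \ref{thm-comp-simp-complex-gen-greater-1} (the case $g \geq 1$) and Theorem \ref{thm-simp-top-comp-gen-0} (the case $g=0$), with no additional argument beyond the exhaustive case split you describe.
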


\bibliographystyle{plain}
\bibliography{biblio}

\end{document}